\documentclass{article}
\usepackage[utf8]{inputenc}
\usepackage{amsthm,amsmath,amssymb,xcolor,placeins}
\usepackage{tikz}
\usepackage{multicol}
\usepackage{enumerate}
\usepackage{xparse}
\usepackage{authblk}
\usepackage{soul}

\usepackage{authblk}

\usepackage{hyperref}
\usetikzlibrary{patterns}

\newtheorem{thm}{Theorem}[section]

\newtheorem{cor}[thm]{Corollary}

\newtheorem{lemma}[thm]{Lemma}

\theoremstyle{definition}

\newtheorem{ex}[thm]{Example}
\newtheorem{obs}[thm]{Observation}

\NewDocumentEnvironment{manual}{O{obs}m}
 {%
 \addtocounter{obs}{-1}%
 \begin{#1}
 }
 {\end{#1}}
 
 \NewDocumentEnvironment{manual2}{O{lemma}m}
 {%
 \addtocounter{lemma}{-1}%
 \begin{#1}
 }
 {\end{#1}}

\renewcommand\det{\operatorname{Det}}
\newcommand\dist{\operatorname{Dist}}
\newcommand\aut{\operatorname{Aut}}
\newcommand\tG{\widetilde G}

\newcommand\tT{\widetilde T}
\newcommand\tS{\widetilde S}
\newcommand\tR{\widetilde R}

\newcommand\tB{\widetilde B}
\newcommand\tD{\widetilde D}

\newcommand\gm{\mu_t(G)}

\newcommand\mug{\mu(G)}
\newcommand\Tt{T_t}

\usepackage{todonotes}
\definecolor{spurple}{RGB}{221,160,221}
\definecolor{pgreen}{RGB}{153,255,204}
\definecolor{lblue}{RGB}{186,225,255}
\definecolor{korange}{RGB}{255,204,102}

\title{Determining Number and Cost of Generalized Mycielskian Graphs}

\author[1]{Debra Boutin}
\author[2]{Sally Cockburn}
\author[3]{Lauren Keough}
\author[4]{Sarah Loeb}
\author[5]{K.~E. Perry}
\author[6]{Puck Rombach}
\affil[1]{\url{dboutin@hamilton.edu}, Hamilton College, Clinton, NY}
\affil[2]{\url{scockbur@hamilton.edu}, Hamilton College, Clinton, NY}
\affil[3]{\url{keoulaur@gvsu.edu}, Grand Valley State University, Allendale Charter Township, MI}
\affil[4]{\url{sloeb@hsc.edu}, Hampden-Sydney College, Hampden-Sydney, VA}
\affil[5]{\url{kperry@soka.edu}, Soka University of America, Aliso Viejo, CA}
\affil[6]{\url{puck.rombach@uvm.edu}, University of Vermont, Burlington, VT}

\date{\today}

\begin{document}

\maketitle                      

\begin{abstract}
 A set $S$ of vertices is a \emph{determining set} for a graph $G$ if every automorphism of $G$ is uniquely determined by its action on $S$. The size of a smallest determining set for $G$ is called its \emph{determining number}, $\det(G)$. A graph $G$ is said to be \emph{$d$-distinguishable} if there is a coloring of the vertices with $d$ colors so that only the trivial automorphism preserves the color classes. The smallest $d$ for which $G$ is $d$-distinguishable is its \emph{distinguishing number}, $\dist(G)$. If $\dist(G) = 2$, the \emph{cost of 2-distinguishing}, $\rho(G)$, is the size of a smallest color class over all 2-distinguishing colorings of $G$. The \emph{Mycielskian}, $\mu(G)$, of a graph $G$ is constructed by adding a root vertex $w$, and for each vertex $v_i$ of $G$ adding a shadow vertex $u_i$, with edges so that the neighborhood of $u_i$ in $\mu(G)$ is the same as the neighborhood of $v_i$ in $G$ with the addition of $w$.  The \emph{generalized Mycielskian} $\mu_t(G)$ of a graph $G$ is a Mycielskian graph with $t$ layers of shadow vertices, each with edges to layers above and below, and the root only adjacent to the top layer of shadow vertices. A graph is \emph{twin-free} if  no two vertices have the same neighborhood. This paper examines the determining number and, when relevant, the cost of 2-distinguishing for Mycielskians and generalized Mycielskians of simple graphs with no isolated vertices. In particular, if $G \neq K_2$ is twin-free with no isolated vertices, then $\det(\gm) = \det(G)$. If in addition $\det(G) \geq 2$ and $t\ge \det(G)-1$, then $\dist(\gm)=2$ and $\rho(\gm) = \det(G)$.
 For $G$ with twins, we develop a framework using quotient graphs with respect to equivalence classes of twin vertices to find  $\det(\mu(G))$ and $\det(\mu_t(G))$ in terms of $\det(G)$. 
 
 \textit{Keywords}: determining number, cost of 2-distinguishing, Mycielskian graph, Generalized Mycielskian graph
\end{abstract}

\section{Introduction}

In 1955, Mycielski~\cite{M1955} introduced a construction that takes a finite simple graph, $G$, and produces a larger graph, $\mu(G)$, called the (\emph{traditional}) \emph{Mycielskian} of $G$, with a strictly larger chromatic number. To construct $\mu(G)$, begin with a copy of $G$. For each $v\in V(G)$, add a \emph{shadow vertex} $u$ and edges between $u$ and the neighbors of $v$. Finally, add a vertex $w$ whose neighbors are precisely the shadow vertices. We call $w$ the \emph{root}. A more formal definition can be found in Section \ref{sec:autos}.

Mycielski iteratively applied this construction to $G=K_2$, creating what are now called the \emph{classic Mycielski graphs}, $M_n$. More precisely, $M_1 = \mu(K_2)$ and for all $k > 1$, $M_k = \mu(M_{k-1}) = \mu^{k}(K_2)$, where we use $\mu^k(G)$ to indicate iteratively applying the Mycielsian construction $k$ times starting with $G$. Thus, $M_1 = \mu(K_2) = C_5$, and $M_2 = \mu(M_1) = \mu^2(K_2)$, which is commonly called the Gr\"{o}tzsch graph. Mycielski proved that these graphs are all triangle-free and satisfy $\chi(M_n) \ge n$, where $\chi(G)$ is the chromatic number of $G$.

The \emph{generalized Mycielskian }of graph $G$ was defined by Stiebitz~\cite{stiebitz1985beitrage} in 1985 (cited in~\cite{ct2001}) and independently by Van Ngoc~\cite{van1987graph} in 1987 (cited in~\cite{van19954}). It is denoted $\gm$ and will also be formally defined in Section~\ref{sec:autos}. This construction can be described as having a copy of $G$ at level $0$ and $t\geq 1$ levels of shadow vertices whose neighborhoods extend to the levels above and below and that mirror the neighborhoods of the vertices of $G$. Finally, $\gm$ has a root $w$ that is adjacent to each shadow vertex at the top level, $t$. Note that $\mu_1(G) = \mug$. The generalized Mycielski construction is used to construct graphs with arbitrarily large odd girth and arbitrarily large chromatic number. 

One strength of the Mycielskian construction is its ability to build large families of graphs with a given parameter fixed  and other parameters strictly growing.  In the last few decades, this has motivated significant work on parameters of $\mu_t(G)$ in terms of the same parameters for $G$.  See for example \cite{FiMcBo1998, LWLG2006, ChXi2006, PaZh2010, BaRa2015, AbRa2019, BCKLPR2020}.  In this paper, we will compare the {determining number} and, when relevant, the {cost of 2-distinguishing} for a finite simple graph $G$ to the same parameters for the Mycielskian graphs arising from $G$. These parameters are defined and motivated below.

A coloring of the vertices of a graph $G$ with the colors $1,\ldots, d$ is called a \emph{$d$-distinguishing coloring} if no nontrivial automorphism of $G$ preserves the color classes. A graph is called \emph{$d$-distinguishable} if it has a $d$-distinguishing coloring. The distinguishing number of $G$, denoted $\dist(G)$, is the smallest number of colors necessary for a distinguishing coloring of $G$. Albertson and Collins introduced graph distinguishing in~\cite{AC1996}.  Independently  in 1977 \cite{Ba1977}, Babai
introduced the same definition but called it \emph{asymmetric coloring}.  Here, we will continue the terminology of Albertson and Collins. Most of the work in graph distinguishing in the last few decades has proved that for a large number of graph families, all but a finite number of members are 2-distinguishable. Examples of such families of finite graphs include: hypercubes $Q_n$ with $n\geq 4$~\cite{BC2004}, Cartesian powers $G^n$ for a connected graph $G\ne K_2,K_3$ and $n\geq 2$~\cite{A2005, IK2006,KZ2007}, and Kneser graphs $K_{n:k}$ with $n\geq 6, k\geq 2$~\cite{AB2007}. Examples of such families of infinite graphs include: the denumerable random graph~\cite{IKT2007}, the infinite hypercube~\cite{IKT2007}, and denumerable vertex-transitive graphs of connectivity 1~\cite{STW2012}. 

In 2007, Imrich~\cite{IW} asked whether distinguishing could be refined to provide more information within the class of 2-distinguishable graphs. In response, Boutin~\cite{B2008} defined the \emph{cost of 2-distinguishing} a 2-distinguishable graph $G$ to be the minimum size of a color class over all 2-distinguishing colorings of $G$. The cost of 2-distinguishing $G$ is denoted $\rho(G)$.

Some of the graph families with known or bounded cost are hypercubes with $\lceil \log_2 n \rceil {+} 1 \leq \rho(Q_n) \leq 2\lceil \log_2 n \rceil {-}1$ for $n\geq 5$ \cite{B2008}, Kneser graphs with $\rho(K_{2^{m}-1:2^{m{-}1}{-}1}) = m{+}1$ \cite{B2013b}, and $\rho(K_{2^m} \Box H) = m\cdot 2^{m{-}1}$, where $\Box$ denotes the Cartesian product and $H$ is a graph with no nontrivial automorphisms~\cite{BI2017}. 
 
A determining set is a useful tool in finding the distinguishing number and, when relevant, the cost of 2-distinguishing. A subset $S\subseteq V(G)$ is said to be a \emph{determining set} for $G$ if the only automorphism that fixes the elements of $S$ pointwise is the trivial automorphism. Equivalently, $S$ is a determining set for $G$ if whenever $\varphi$ and $\psi$ are automorphisms of $G$ with $\varphi(x)=\psi(x)$ for all $x\in S$, then $\varphi=\psi$~\cite{B2006}. The \emph{determining number} of a graph $G$, denoted $\det(G)$, is the size of a smallest determining set. Intuitively, if we think of automorphisms of a graph as allowing vertices to move among their relative positions, one can think of the determining number as the smallest number of pins needed to \lq\lq pin down" the graph. 

For some graph families, we only have bounds on the determining number. For instance, for Kneser graphs, $\log_2 (n{+}1)\leq \det(K_{n:k}) \leq n{-}k$ with both upper and lower bounds sharp \cite{B2006}. However, there are families for which we know the determining numbers of its members exactly. In particular, for hypercubes, $\det(Q_n){=}\lceil \log_2 n \rceil {+}1$, and for Cartesian powers $\det(K_3^n){=} \lceil \log_3(2n{+}1) \rceil {+}1$~\cite{B2009}. 

Though distinguishing numbers and determining numbers were introduced by different people and for different purposes, they have strong connections. Albertson and Boutin showed in~\cite{AB2007} that if $G$ has a determining set $S$ of size $d$, then giving each vertex in $S$ a distinct color from $1,\ldots,d$ and every other vertex color $d{+}1$ gives a $(d{+}1)$-distinguishing coloring of $G$. Thus, $\dist(G) \leq \det(G) {+}1$.

Further, in~\cite{B2013}, Boutin pointed out that, given a 2-distinguishing coloring of $G$, since only the trivial automorphism preserves the color classes setwise, only the trivial automorphism preserves them pointwise. Consequently, each of the color classes in a 2-distinguishing coloring is a determining set for the graph, though not necessarily of minimum size. Thus, if $G$ is 2-distinguishable, then $\det(G) \leq \rho(G)$.

In 2018, Alikhani and Soltani~\cite{AS2018} studied the distinguishing number of the traditional Mycielskian of a graph. In particular, they showed that the classic Mycielski graphs $M_k = \mu^k(K_2)$ satisfy $\dist(M_n) = 2$ for all $n \ge 2$. 

To generalize to Mycielskians of arbitrary graphs, they considered the role of twin vertices. Two vertices in a graph are said to be \emph{twins} if they have the same open neighborhood, and a graph is said to be \emph{twin-free} if it does not contain any twins. In particular, Alikhani and Soltani proved that if $G$ is twin-free with at least two vertices, then $\dist(\mu(G)) \le \dist(G){+}1$. Further, they conjectured that for all but a finite number of connected graphs $G$ with at least 3 vertices, $\dist(\mu(G)) \le \dist(G)$. In \cite{BCKLPR2020}, Boutin, Cockburn, Keough, Loeb, Perry, and Rombach proved the conjecture with the theorem stated below. Notice that this theorem does not require graph connectedness. 

 \begin{thm}\label{thm:AS}\cite{BCKLPR2020}
 Let $G\neq K_1, K_2$ be a graph with $\ell \geq 0$ isolated vertices. If $t \ell > \dist(G)$, then $\dist(\mu_t(G)) = t \ell$. Otherwise, 
 $\dist(\mu_t(G))\leq \dist(G)$. \end{thm}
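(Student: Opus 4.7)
The plan is to analyze the isolated vertices of $G$ and their shadows, derive the lower bound in the first case from a twin argument, and then construct an explicit distinguishing coloring for the upper bound in both cases. First, if $v_1, \ldots, v_\ell$ are the isolated vertices of $G$, then since $N_G(v_i) = \emptyset$, the shadow $u_{v_i}^{(k)}$ at each level $k \in \{1, \ldots, t{-}1\}$ has empty neighborhood in $\mu^{(t)}(G)$, and the top-level shadow $u_{v_i}^{(t)}$ has $w$ as its only neighbor. Combined with the $v_i$ themselves at level $0$, the graph $\mu^{(t)}(G)$ contains exactly $t\ell$ pairwise-twin isolated vertices and $\ell$ pairwise-twin pendants of $w$. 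Since twins must receive distinct colors in any distinguishing coloring, this immediately yields $\dist(\mu^{(t)}(G)) \geq t\ell$.

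For the upper bound, let $G' := G - \{v_1, \ldots, v_\ell\}$ be the non-isolated part of $G$ and set $d := \max(t\ell, \dist(G))$. I would construct a $d$-distinguishing coloring by: giving the $t\ell$ isolated vertices pairwise distinct colors; giving the $\ell$ pendants of $w$ pairwise distinct colors; fixing a $\dist(G)$-distinguishing coloring $c$ of $G$ and applying it at level $0$; extending to each shadow by $c'(u_v^{(k)}) := c(v)$ for $v \in V(G')$ and $k \in \{1, \ldots, t\}$; and coloring $w$ arbitrarily. Since $d \geq t\ell \geq \ell$ and $d \geq \dist(G) \geq \dist(G')$, only $d$ colors are required. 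Combining with the twin lower bound would then give $\dist(\mu^{(t)}(G)) = t\ell$ when $t\ell > \dist(G)$, and $\dist(\mu^{(t)}(G)) \leq \dist(G)$ otherwise.

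To verify the coloring is distinguishing, let $\phi$ be a color-preserving automorphism of $\mu^{(t)}(G)$. The distinct colors on the isolated vertices and the pendants force $\phi$ to fix them pointwise. When $\ell \geq 1$, each pendant's unique neighbor is $w$, forcing $\phi(w) = w$; when $\ell = 0$, one argues $\phi(w) = w$ by structural uniqueness of $w$ (its open neighborhood is an independent set of size $|V(G)|$, distinguishing it from all other candidates provided $G \neq K_1, K_2$). Once $w$ is fixed, $\phi$ preserves each layer: level $0$ is the unique layer containing any internal edges (when $G'$ is nonempty), and an upward induction on shadow layers proceeds because $\phi(u_v^{(k+1)})$ must share color $c(v)$ and have the same level-$k$ neighborhood as $u_v^{(k+1)}$; since twins in $G$ receive distinct colors under $c$, this forces $\phi(u_v^{(k+1)}) = u_v^{(k+1)}$. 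Hence $\phi$ is the identity.

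I expect the main obstacle to be the layer-preservation step, particularly the structural identification of $w$ when $\ell = 0$, the degenerate case $G' = \emptyset$ (when $G$ is entirely isolated and level $0$ carries no edges), and verifying the shadow-layer induction when $G$ has twins. The exclusions $G = K_1, K_2$ are essential to the ``otherwise'' conclusion: $\mu^{(1)}(K_1) = K_1 \sqcup K_2$ has $\dist = 2 > 1 = \dist(K_1)$, and $\mu^{(1)}(K_2) = C_5$ has $\dist = 3 > 2 = \dist(K_2)$, so the bound $\dist(\mu^{(t)}(G)) \leq \dist(G)$ genuinely fails for these two graphs.
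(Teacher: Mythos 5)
A preliminary remark: this paper does not actually prove Theorem~\ref{thm:AS} --- it is quoted from~\cite{BCKLPR2020} --- so your argument can only be measured against the supporting machinery the paper develops, chiefly Lemma~\ref{lem:phiwisv}. Your overall strategy is sound and in the right spirit: the lower bound $\dist(\mu^{(t)}(G))\ge t\ell$ from the $t\ell$ mutually twin isolated vertices (the level-$0$ through level-$(t{-}1)$ shadows of isolated vertices of $G$), an explicit coloring with $\max(t\ell,\dist(G))$ colors, and a bottom-up layer induction once $w$ is pinned, using the fact that twins of $G$ must already receive distinct colors under $c$.

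The genuine gap is your treatment of $\widehat\alpha(w)=w$ when $\ell=0$. You assert that $w$ is structurally unique ``provided $G\neq K_1,K_2$,'' but this is false: Lemma~\ref{lem:phiwisv}(\ref{lem:phiwisv:ii}) of this paper shows that for $G=K_{1,m}$ with $m\ge 2$ (which satisfies the hypotheses and has $\ell=0$) there is an automorphism of $\mu^{(t)}(G)$ carrying $w$ to the top-level shadow $u^t$ of the center; indeed $N(u^t)$ is also an independent set of size $|V(G)|$, so your proposed structural criterion does not separate them. This is not merely a missing verification --- your coloring as stated can fail. Take $G=K_{1,3}$ and $t=1$, with center $v$ colored $1$, leaves colored $1,2,3$, each shadow inheriting its original's color, and $w$ colored (``arbitrarily'') $1$. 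The map fixing $v$, swapping $w$ with $u_v$, and swapping each leaf with its own shadow is an automorphism of $\mu(K_{1,3})$ that preserves every color class, so the coloring is not distinguishing. The repair is easy but must be made explicit: when $G=K_{1,m}$, give $w$ a color different from that of the center's top shadow (possible since $\dist(K_{1,m})=m\ge 2$ colors are available), after which every color-preserving automorphism fixes $w$ and your induction goes through. The remaining steps --- the pendants of $w$ being the only degree-one vertices, level preservation via distance from $w$ on the non-isolated part, and the twin argument closing the layer induction --- are correct, if compressed.
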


As seen in Theorem~\ref{thm:AS} and \cite{LWLG2006,BR2008}, the presence of isolated vertices in $G$ has a significant effect on the structure and behavior of $\gm$. If $v_i$ is an isolated vertex in $G$ and $t \ge 2$, then in $\mu_t(G)$, $v_i = u_i^0, u_i^1, \dots u_i^{t{-}1}$ are all isolated vertices and, hence, mutually twins. Thus, it is common to exclude isolated vertices when studying the Mycielski constructions. In this paper, we will restrict our attention to finite simple graphs that are not necessarily connected, but that have no isolated vertices. Analogous results for Mycielskians of graphs with isolated vertices, including $K_1$, are covered in a forthcoming paper. 

In Section~\ref{sec:autos}, we give the formal definitions of the traditional and generalized Mycielskians of a graph, and provide lemmas regarding the action of their automorphisms. The twin-free case is considered in Section~\ref{sec:det&rhoTwinFree}. In particular, we prove that if a graph $G$ is not $K_2$ and is twin-free with no isolated vertices, then \[ \det(\gm) = \det(G). \] We also show that if, in addition to the above hypotheses, $\det(G) \geq 2$ and $t\ge \det(G)-1$, then \[\dist(\gm)=2 \text{ \ and \ } \rho(\gm)=\det(G). \]
 
Finally, in Section~\ref{sec:det&rhoTwins}, we prove results on the determining number for the Mycielski construction applied to graphs with twins. To accomplish this we develop a technique utilizing equivalence classes of twins and the resulting quotient graph. This technique allows us to show that the presence of twin vertices in $G$ causes the determining number of $\gm$ to grow linearly with the number of layers, $t$. More precisely, if $G$ is a graph with twins and $T$ is the set consisting of all but one vertex from each set of mutually twin vertices in $G$, then

\[\det(\gm) = t|T| + \det(G) .\]


\section{Generalized Mycielskian Graphs} \label{sec:autos}

In this section, we formally define the traditional and generalized Mycielskians of a graph and we present observations about twin vertices in these graphs. We then discuss how automorphisms of the Mycielskian graph behave when the underlying graph is twin-free and has no isolated vertices.

Throughout this paper, let $N_G(v)$ be the open neighborhood of $v$ in $G$. For ease of notation, the open neighborhood of $v$ in the Mycielskian graph will simply be denoted $N(v)$.

Suppose $G$ is a finite simple graph with $V(G) = \{v_1,\ldots,v_n\}$. The \emph{Mycielskian of} $G$, denoted $\mu(G)$, is a graph with vertex set 
\[
V(\mu(G)) = \{ v_1, \dots, v_n, u_1, \dots u_n, w\}.
\]
 For each edge $v_iv_j \in E(G)$, $\mu(G)$ has edges $v_iv_j$, $u_iv_j$ and $v_iu_j$; and additionally, $u_iw$ for all $1 \le i \le n$. That is, $N(u_i)=N_G(v_i)\cup\{w\}$ and $\mu(G)$ contains $G$ as an induced subgraph on the vertex set $\{ v_1,\dots,v_n\}$. We refer to the vertices $v_1, \dots , v_n$ in $\mu(G)$ as {\em original vertices} and the vertices $u_1, \dots , u_n$ as {\em shadow vertices}. 
 We refer to vertex $w$ as the {\em root}. Notice that the shadow vertices form an independent set. See Figure~\ref{fig:GenMyExamplesTrad} for drawings of $\mu(K_2)$ and $\mu(K_3)$.

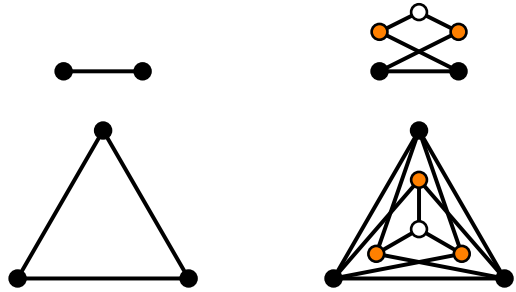
\begin{figure}[h]
\centering
 \begin{tikzpicture}[scale=.7]
 \begin{scope}[shift={(0,0)},scale=.75]
  \draw[black!100,line width=1.5pt] (-1,0) -- (1,0); 
 \draw[fill=black!100,line width=1] (-1,0) circle (.2);
 \draw[fill=black!100,line width=1] (1,0) circle (.2);
    \end{scope}
    
\begin{scope}[shift={(6,0)},rotate=180,scale=.75]
\draw[black!100,line width=1.5pt] (-1,0) -- (1,0); 
\draw[black!100,line width=1.5pt] (-1,0) -- (1,-1);
\draw[black!100,line width=1.5pt] (-1,-1) -- (1,0);
\draw[black!100,line width=1.5pt] (-1,-1) -- (0,-1.5);
\draw[black!100,line width=1.5pt] (1,-1) -- (0,-1.5);
\draw[fill=black!100,line width=1] (-1,0) circle (.2);
\draw[fill=black!100,line width=1] (1,0) circle (.2);
\draw[fill=orange!100,line width=1] (-1,-1) circle (.2);
\draw[fill=orange!100,line width=1] (1,-1) circle (.2);
\draw[fill=white!100,line width=1] (0,-1.5) circle (.2);
\end{scope}

\begin{scope}[shift={(0,-3)},scale=.75]
\draw[black!100,line width=1.5pt] (-30:2.5) -- (-150:2.5);
\draw[black!100,line width=1.5pt] (90:2.5) -- (-150:2.5);
\draw[black!100,line width=1.5pt] (-30:2.5) -- (90:2.5);
 \draw[fill=black!100,line width=1] (-30:2.5) circle (.2);
 \draw[fill=black!100,line width=1] (90:2.5) circle (.2);
 \draw[fill=black!100,line width=1] (-150:2.5) circle (.2);
\end{scope}

\begin{scope}[shift={(6,-3)},scale=.75]
\draw[black!100,line width=1.5pt] (-30:2.5) -- (-150:2.5);
\draw[black!100,line width=1.5pt] (90:2.5) -- (-150:2.5);
\draw[black!100,line width=1.5pt] (-30:2.5) -- (90:2.5);
\draw[black!100,line width=1.5pt] (-30:2.5) -- (-150:1.25);
\draw[black!100,line width=1.5pt] (90:2.5) -- (-150:1.25);
\draw[black!100,line width=1.5pt] (-30:2.5) -- (90:1.25);
\draw[black!100,line width=1.5pt] (-30:1.25) -- (-150:2.5);
\draw[black!100,line width=1.5pt] (90:1.25) -- (-150:2.5);
\draw[black!100,line width=1.5pt] (-30:1.25) -- (90:2.5);
\draw[black!100,line width=1.5pt] (-30:1.25) -- (0,0);
\draw[black!100,line width=1.5pt] (90:1.25) -- (0,0);
\draw[black!100,line width=1.5pt] (-150:1.25) -- (0,0);
 \draw[fill=black!100,line width=1] (-30:2.5) circle (.2);
 \draw[fill=black!100,line width=1] (90:2.5) circle (.2);
 \draw[fill=black!100,line width=1] (-150:2.5) circle (.2);
 \draw[fill=orange!100,line width=1] (-30:1.25) circle (.2);
 \draw[fill=orange!100,line width=1] (90:1.25) circle (.2);
 \draw[fill=orange!100,line width=1] (-150:1.25) circle (.2);
 \draw[fill=white!100,line width=1] (0,0) circle (.2);
\end{scope}
\end{tikzpicture}

\caption{Top: $K_2$ and $\mu(K_2)$ drawn with vertical levels with the root at the top. Bottom: $K_3$ and $\mu(K_3)$ drawn with concentric levels with the root in the middle.}

\label{fig:GenMyExamplesTrad}

\end{figure}

We now establish an observation about the relationship between automorphisms of graphs and automorphisms of their Mycielskians.

\begin{obs}\label{obs:auts} \it An automorphism $\alpha$ of $G$ induces an automorphism $\widehat \alpha$ on $\mu(G)$ by replicating the action of $\alpha$ on the set of original vertices and also on the set of shadow vertices, and leaving the root fixed. That is, we can define the automorphism $\widehat \alpha$ of $\mu(G)$ by $\widehat\alpha(v_i) = \alpha(v_i) = v_j$ and $\widehat \alpha(u_i) = u_j$ and $\widehat\alpha(w)=w$. \end{obs}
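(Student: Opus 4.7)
The plan is to verify directly from the definition of $\mu(G)$ that the proposed map $\widehat\alpha$ is a well-defined bijection on $V(\mu(G))$ that preserves the edge relation, which is all that is needed for it to be an automorphism.

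First I would establish bijectivity. Since $\alpha$ is a permutation of $\{v_1,\ldots,v_n\}$, the restriction of $\widehat\alpha$ to the original vertices is a bijection of that set onto itself. The rule $u_i\mapsto u_j$ whenever $\alpha(v_i)=v_j$ is just the same permutation transported to the shadow layer via the indexing, so it is a bijection on $\{u_1,\ldots,u_n\}$. Together with $w\mapsto w$, and because the three sets $\{v_1,\ldots,v_n\}$, $\{u_1,\ldots,u_n\}$, $\{w\}$ partition $V(\mu(G))$, we get a bijection of $V(\mu(G))$.

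Next I would verify edge preservation by checking each of the three edge types in $\mu(G)$. Writing $\alpha(v_i)=v_{i'}$ and $\alpha(v_j)=v_{j'}$: an edge $v_iv_j$ exists in $\mu(G)$ iff $v_iv_j\in E(G)$, and this is equivalent to $v_{i'}v_{j'}\in E(G)$ since $\alpha\in\aut(G)$, which in turn is equivalent to $\widehat\alpha(v_i)\widehat\alpha(v_j)\in E(\mu(G))$. An edge $u_iv_j$ exists iff $v_iv_j\in E(G)$, and its image $u_{i'}v_{j'}$ is an edge iff $v_{i'}v_{j'}\in E(G)$, so the same biconditional goes through. Finally, every edge of the form $u_iw$ is present in $\mu(G)$ for each $i$, and its image $u_{i'}w$ is likewise always an edge; since $w$ is fixed, no other edges incident to $w$ are created or destroyed. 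Each step is a biconditional, so non-edges are sent to non-edges as well.

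There is no genuine obstacle here: the only care required is notational bookkeeping to keep the vertex indices aligned between the original and shadow layers, which is automatic from the fact that shadow vertices are labeled by their corresponding original vertices.
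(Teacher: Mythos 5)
Your proof is correct. The paper states this as an Observation and supplies no proof at all, treating the verification as routine; your argument (bijectivity from the fact that $\widehat\alpha$ permutes each of the three parts $\{v_1,\ldots,v_n\}$, $\{u_1,\ldots,u_n\}$, $\{w\}$ separately, plus a biconditional check of each edge type $v_iv_j$, $u_iv_j$, $u_iw$) is exactly the standard verification the authors are implicitly relying on, and your closing remark that the partition-preservation handles the remaining non-edges ($u_iu_j$ and $v_iw$ pairs) closes the only loose end.
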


We will see that for most graphs, the only automorphisms of $\mu(G)$ are those induced by automorphisms of $G$. Making this more precise involves a discussion of twin vertices. Recall from the introduction that vertices $x$ and $y$ are called twins if they have precisely the same set of neighbors. It is possible to have a collection of three or more mutually twin vertices. If two vertices of $G$ are twins, then it is straightforward to show that exchanging the twins and fixing the remaining vertices is an automorphism of $G$. Recall from the introduction that a vertex set $S$ is a determining set for $G$ if the only automorphism that fixes the elements of $S$ pointwise is the trivial automorphism. Thus, every determining set must contain all but one representative from every collection of mutually twin vertices. 

The following are some structural observations about relationships between twins in $G$ and twins in $\mu(G)$.

\begin{obs}\label{obs:twins1}\it
In $\mu(G)$, the root $w$ is adjacent to each shadow vertex, but not adjacent to any original vertex, so twins in $\mu(G)$ are either both shadow vertices or both original vertices.\end{obs}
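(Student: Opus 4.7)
My approach would be a short case analysis driven by the role the shadow master $w$ plays in adjacency. By construction, $N(w)=\{u_1,\ldots,u_n\}$ consists entirely of shadow vertices, every shadow vertex $u_i$ has $w\in N(u_i)$, and no original vertex $v_i$ is adjacent to $w$. Since a twin pair must have equal open neighborhoods, these three facts immediately control which pairs can possibly be twins, and the whole statement should fall out of a quick comparison of neighborhoods involving $w$.

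Accordingly, I would split into two cases. First I would show that $w$ itself has no twin. The shadow master cannot be a twin of a shadow vertex $u_i$, because $w\in N(u_i)$ while $w\notin N(w)$. It also cannot be a twin of an original vertex $v_i$: under the standing hypothesis that $G$ has no isolated vertex, the set $N(v_i)$ in $\mu(G)$ contains at least one original vertex (indeed, all of $N_G(v_i)$), whereas $N(w)\cap\{v_1,\ldots,v_n\}=\emptyset$. Second, I would rule out any mixed twin pair $\{v_i,u_j\}$ consisting of one original and one shadow vertex. The same observation settles this case in one line: $w\in N(u_j)\setminus N(v_i)$.

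What remains after eliminating these possibilities is exactly the stated conclusion: any twin pair in $\mu(G)$ consists either of two shadow vertices or of two original vertices, and $w$ never participates in such a pair. The main obstacle is essentially nothing, since this is a structural check rather than a substantive argument; the one point requiring care is to track the three vertex types (original, shadow, shadow master) consistently and to remember to invoke the no-isolated-vertex hypothesis in the $w$-versus-$v_i$ subcase. This observation will then be used repeatedly in later sections when reasoning about equivalence classes of twins in $\mu(G)$.
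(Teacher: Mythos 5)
Your proposal is correct and follows essentially the same reasoning the paper gives inline with the observation: adjacency to the shadow master $w$ separates shadow vertices from original vertices, so no mixed pair (and no pair involving $w$ itself) can have equal open neighborhoods. The only addition is your explicit check that $w$ has no twin, which the paper leaves implicit.
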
 

\begin{obs} \label{obs:twins2}\it If 
$v_i$ and $v_j$ are twins in $G$, then they are twins in $\mu(G)$ and so are their shadows $u_i$ and $u_j$. \end{obs}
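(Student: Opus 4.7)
My plan is to verify the observation directly from the edge definition of $\mu(G)$ by computing the neighborhoods of $v_i, v_j, u_i, u_j$ in $\mu(G)$ in terms of $N_G(v_i)$ and $N_G(v_j)$, and then invoking the hypothesis $N_G(v_i) = N_G(v_j)$.

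First I would unpack the neighborhood of each $v_i$ in $\mu(G)$. Since $\mu(G)$ contains $G$ as an induced subgraph on $\{v_1,\dots,v_n\}$, the original-vertex neighbors of $v_i$ in $\mu(G)$ are exactly the vertices of $N_G(v_i)$. Further, by definition, $u_k v_i \in E(\mu(G))$ if and only if $v_k v_i \in E(G)$, i.e.\ $v_k \in N_G(v_i)$; and $v_i$ is never adjacent to $w$. Hence
\[
N_{\mu(G)}(v_i) \;=\; N_G(v_i) \,\cup\, \{u_k : v_k \in N_G(v_i)\}.
\]
The analogous expression holds for $v_j$. Because $N_G(v_i) = N_G(v_j)$ by hypothesis, both the original-vertex part and the shadow-vertex part agree, so $N_{\mu(G)}(v_i) = N_{\mu(G)}(v_j)$, proving that $v_i$ and $v_j$ are twins in $\mu(G)$.

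Next I would repeat the argument for the shadows. By the construction $N(u_i) = N_G(v_i) \cup \{w\}$ and $N(u_j) = N_G(v_j) \cup \{w\}$. The hypothesis $N_G(v_i) = N_G(v_j)$ immediately gives $N_{\mu(G)}(u_i) = N_{\mu(G)}(u_j)$, so $u_i$ and $u_j$ are twins in $\mu(G)$ as well.

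There is no real obstacle here; the proof is a direct unpacking of the Mycielski construction. The only mild subtlety worth flagging is that $v_i$ and $v_j$ must themselves be nonadjacent (otherwise $v_i \in N_G(v_j) = N_G(v_i)$ would force a loop), so the two neighborhood descriptions above do not have any hidden self-membership that needs separate handling. Because twinhood is a statement about open neighborhoods and both $v_i\mapsto v_j$ relations reduce immediately to $N_G(v_i)=N_G(v_j)$, the observation follows with no case analysis.
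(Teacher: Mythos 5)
Your proof is correct and is exactly the direct definition-unpacking that the paper leaves implicit: the statement appears there as an unproved observation, and your computation of $N_{\mu(G)}(v_i)$ and $N(u_i)=N_G(v_i)\cup\{w\}$ together with the hypothesis $N_G(v_i)=N_G(v_j)$ is the intended justification. The side remark about nonadjacency of open-neighborhood twins is accurate but not needed for the argument.
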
 

\begin{obs} \label{obs:twins3} \it If at least one pair of $\{v_i,v_j\}$ or $\{u_i,u_j\}$ is twins in $\mu(G)$, then both pairs are, as is $\{v_i,v_j\}$ within $G$. 
\end{obs}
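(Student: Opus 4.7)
The plan is to unwind the definition of $\mu(G)$ so that any twin condition in $\mu(G)$ translates cleanly into a twin condition in $G$. From the construction, for every index $i$ we have
\[
N(u_i) = N_G(v_i) \cup \{w\} \quad \text{and} \quad N(v_i) = N_G(v_i) \cup \{u_k : v_k \in N_G(v_i)\}.
\]
The key point is that $V(\mu(G))$ is the disjoint union of the original vertices $\{v_1,\dots,v_n\}$, the shadow vertices $\{u_1,\dots,u_n\}$, and $\{w\}$, so $N_G(v_i)$ can be recovered from $N(v_i)$ by intersecting with $\{v_1,\dots,v_n\}$, and from $N(u_i)$ by removing $\{w\}$.

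I would then handle the two cases separately. If $\{v_i,v_j\}$ are twins in $\mu(G)$, intersecting $N(v_i) = N(v_j)$ with $\{v_1,\dots,v_n\}$ gives $N_G(v_i) = N_G(v_j)$, so $\{v_i,v_j\}$ are twins in $G$. Similarly, if $\{u_i,u_j\}$ are twins in $\mu(G)$, removing $\{w\}$ from both sides of $N(u_i) = N(u_j)$ again yields $N_G(v_i) = N_G(v_j)$, so once more $\{v_i,v_j\}$ are twins in $G$. In either case, once this is established, Observation~\ref{obs:twins2} delivers the remaining conclusions: both $\{v_i,v_j\}$ and $\{u_i,u_j\}$ are twin pairs in $\mu(G)$.

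There is essentially no obstacle here; the statement is a direct consequence of how neighborhoods are organized in the Mycielskian, and the main thing to be explicit about is the clean partition of $V(\mu(G))$ that lets $N_G(v_i)$ be extracted from either neighborhood type without ambiguity.
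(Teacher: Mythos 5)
Your argument is correct and is exactly the definition-unwinding the paper leaves implicit: the authors state this as an unproved structural observation, and your two cases (recovering $N_G(v_i)$ from $N(v_i)$ by intersecting with the original vertices, and from $N(u_i)$ by deleting $w$) together with the appeal to Observation~\ref{obs:twins2} for the converse direction is the intended reasoning. No gaps.
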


In~\cite{AS2018}, Alikhani and Soltani considered automorphisms of $\mu(G)$ when $G$ is twin-free, and proved the following.

\begin{lemma} \label{lem:A&Sfixed}
\cite{AS2018} If $G$ is twin-free and $\widehat\alpha$ is an automorphism of $\mu(G)$ that fixes the root, then 
\begin{enumerate}[(i)] 
\item\label{lem:A&Sfixed:levels} $\widehat\alpha$ preserves the set of original vertices, $\{v_1,\ldots,v_n\}$, and the set of shadow vertices, $\{u_1,\ldots,u_n\}$;
\item\label{lem:A&Sfixed:restrict} $\widehat\alpha$ restricted to $\{v_1, \dots, v_n\}$ is an automorphism $\alpha$ of $G$;
\item\label{lem:A&Sfixed:shadows} $\alpha(v_i) = v_j$ if and only if $\widehat\alpha(u_i) = u_j$.
\end{enumerate}
\end{lemma}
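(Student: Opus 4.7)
The plan is to treat the three parts in order, with the twin-free hypothesis entering decisively only in part (iii). For part (i), I would argue from the unique role of the shadow master. In $\mu(G)$, the set $N(w)$ is exactly the shadow-vertex set $\{u_1,\dots,u_n\}$ (since $w$ is adjacent to every shadow vertex and to no original vertex). Because $\widehat\alpha$ is an automorphism fixing $w$, it must satisfy $\widehat\alpha(N(w)) = N(\widehat\alpha(w)) = N(w)$, so the shadow set is preserved setwise. Since $\widehat\alpha$ is a bijection of $V(\mu(G))$ that fixes $w$ and stabilizes the shadow set, the complementary set of original vertices is also stabilized.

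Part (ii) then follows almost immediately: by (i), the restriction $\alpha := \widehat\alpha\big|_{\{v_1,\dots,v_n\}}$ is a bijection from $V(G)$ to itself. Because $G$ is an induced subgraph of $\mu(G)$ on the original vertices, any edge $v_iv_j \in E(G)$ is carried to an edge of $\mu(G)$ whose endpoints are both original, hence to an edge of $G$; the same holds for $\widehat\alpha^{-1}$, so $\alpha \in \mathrm{Aut}(G)$.

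The main content is part (iii), where twin-freeness is essential. Suppose $\alpha(v_i) = v_j$. By the definition of $\mu(G)$, $N(u_i) = N_G(v_i) \cup \{w\}$, and since $\widehat\alpha$ fixes $w$ and agrees with $\alpha$ on original vertices (by part (ii)),
\[
N(\widehat\alpha(u_i)) = \widehat\alpha(N(u_i)) = \alpha(N_G(v_i)) \cup \{w\} = N_G(v_j) \cup \{w\} = N(u_j).
\]
By part (i), $\widehat\alpha(u_i)$ is some shadow vertex $u_k$, so $u_k$ and $u_j$ have the same neighborhood in $\mu(G)$. If $k \neq j$ then $u_k$ and $u_j$ are twins in $\mu(G)$, and Observation~2.3 then forces $v_k$ and $v_j$ to be twins in $G$, contradicting twin-freeness. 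Hence $k = j$, i.e.\ $\widehat\alpha(u_i) = u_j$. The converse direction is obtained by applying the same argument to the automorphism $\widehat\alpha^{-1}$ (whose restriction to original vertices is $\alpha^{-1}$ and which also fixes $w$).

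The step I expect to require the most care is the twin-freeness argument in (iii): one must resist the temptation to conclude $\widehat\alpha(u_i)=u_j$ directly from the shadow-vertex neighborhood computation, and instead notice that the equality of neighborhoods in $\mu(G)$ gives only that $\widehat\alpha(u_i)$ and $u_j$ are twins or equal in $\mu(G)$, and then invoke Observation~2.3 to pull the twin relation back to $G$ where the hypothesis is available. Parts (i) and (ii) are structural bookkeeping by comparison.
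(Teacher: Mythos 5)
Your proof is correct and follows essentially the same route as the paper, which cites this lemma from Alikhani--Soltani but proves the generalized analogue (Lemma~\ref{lem:GenA&Sfixed}) by the same two ideas: the fixed shadow master forces level preservation via its neighborhood/distance, and twin-freeness makes vertices (and hence their shadows) uniquely identified by their open neighborhoods, with Observation~\ref{obs:twins3} pulling the twin relation back to $G$. Your phrasing of part~(iii) as ``equal neighborhoods plus twin-freeness forces equality'' is exactly the paper's biconditional argument, just organized contrapositively.
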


Thus, if $G$ is twin-free, then every automorphism of $\mu(G)$ that fixes the root is induced by an automorphism of $G$.

The \emph{generalized Mycielskian} of $G$, also known as a \emph{cone over} $G$, was defined by Stiebitz~\cite{stiebitz1985beitrage} in 1985 (cited in~\cite{ct2001}) and independently by Van Ngoc~\cite{van1987graph} in 1987 (cited in~\cite{van19954}), and has multiple levels of shadow vertices. More precisely, for $t \geq 1$, the generalized Mycielskian of $G$, denoted $\mu_t(G)$, has vertex set
 
 \[\{u^0_1,\ldots,u^0_n,u^1_1,\ldots,u^1_n,\dots,u^t_1,\ldots,u^t_n,w\}.\]
 
For each edge $v_i v_j$ in $G$, the graph $\mu_t(G)$ has edge $v_i v_j = u_i^0u_j^0$, as well as edges $u^s_iu^{s{+}1}_j$ and $u^s_j u^{s+1}_i$, for $0\leq s <t$. Finally, $\mu_t(G)$ has edges $u^t_i w$ for $1 \leq i \leq n$. Intuitively, for $1\leq s \leq t{-}1$, the neighbors of the shadow vertex $u_i^s$ are the shadows of the neighbors of $v_i$ both at level $s{-}1$ and at level $s{+}1$, while the neighbors of $u_i^t$ are the shadows of the neighbors of $v_i$ at level $t{-}1$ and the root $w$.

We say that vertex $u_i^s$ is \emph{at level $s$} and we call level $t$ the \emph{top level}. In addition, we make the identification $u_i^0 = v_i$. As we did for the traditional Mycielskian, we refer to the vertices at level $0$ as \emph{original vertices}, to the vertices at level $1\le s\le t$ as \emph{shadow vertices}, and to $w$ as the \emph{root}. Since $\mu_1(G) = \mug$, we omit the subscript when $t=1$. See Figure~\ref{fig:GenMyExamplesGen} for drawings of $\mu_2(K_2)$ and $\mu_2(K_3)$.

\begin{figure}[h]
\centering
 \begin{tikzpicture}[scale=.7]
 \begin{scope}[shift={(0,0)},scale=.75]
  \draw[black!100,line width=1.5pt] (-1,0) -- (1,0); 
 \draw[fill=black!100,line width=1] (-1,0) circle (.2);
 \draw[fill=black!100,line width=1] (1,0) circle (.2);
    \end{scope}
    
    \begin{scope}[shift={(6,0)},rotate=180, scale =.75]
\draw[black!100,line width=1.5pt] (-1,0) -- (1,0); 
\draw[black!100,line width=1.5pt] (-1,0) -- (1,-1);
\draw[black!100,line width=1.5pt] (-1,-1) -- (1,0);
\draw[black!100,line width=1.5pt] (-1,-1) -- (1,-2);
\draw[black!100,line width=1.5pt] (-1,-2) -- (1,-1);
\draw[fill=black!100,line width=1] (-1,0) circle (.2);
\draw[black!100,line width=1.5pt] (-1,-2) -- (0,-2.5);
\draw[black!100,line width=1.5pt] (1,-2) -- (0,-2.5);
\draw[fill=black!100,line width=1] (1,0) circle (.2);
\draw[fill=orange!100,line width=1] (-1,-1) circle (.2);
\draw[fill=orange!100,line width=1] (1,-1) circle (.2);
\draw[fill=yellow!100,line width=1] (-1,-2) circle (.2);
\draw[fill=yellow!100,line width=1] (1,-2) circle (.2);
\draw[fill=white!100,line width=1] (0,-2.5) circle (.2);
\end{scope}

\begin{scope}[shift={(0,-3)},scale=.75]
\draw[black!100,line width=1.5pt] (-30:2.5) -- (-150:2.5);
\draw[black!100,line width=1.5pt] (90:2.5) -- (-150:2.5);
\draw[black!100,line width=1.5pt] (-30:2.5) -- (90:2.5);
 \draw[fill=black!100,line width=1] (-30:2.5) circle (.2);
 \draw[fill=black!100,line width=1] (90:2.5) circle (.2);
 \draw[fill=black!100,line width=1] (-150:2.5) circle (.2);
\end{scope}

\begin{scope}[shift={(6,-3)},scale=.75]
\draw[black!100,line width=1.5pt] (-30:2.5) -- (-150:2.5);
\draw[black!100,line width=1.5pt] (90:2.5) -- (-150:2.5);
\draw[black!100,line width=1.5pt] (-30:2.5) -- (90:2.5);
\draw[black!100,line width=1pt] (-30:2.5) -- (-150:1.66);
\draw[black!100,line width=1pt] (90:2.5) -- (-150:1.66);
\draw[black!100,line width=1pt] (-30:2.5) -- (90:1.66);
\draw[black!100,line width=1pt] (-30:1.66) -- (-150:2.5);
\draw[black!100,line width=1pt] (90:1.66) -- (-150:2.5);
\draw[black!100,line width=1pt] (-30:1.66) -- (90:2.5);
\draw[black!100,line width=1pt] (-30:.83) -- (-150:1.66);
\draw[black!100,line width=1pt] (90:.83) -- (-150:1.66);
\draw[black!100,line width=1pt] (-30:.83) -- (90:1.66);
\draw[black!100,line width=1pt] (-30:1.66) -- (-150:.83);
\draw[black!100,line width=1pt] (90:1.66) -- (-150:.83);
\draw[black!100,line width=1pt] (-30:1.66) -- (90:.83);
\draw[black!100,line width=1pt] (-30:.83) -- (0,0);
\draw[black!100,line width=1pt] (90:.83) -- (0,0);
\draw[black!100,line width=1pt] (-150:.83) -- (0,0);
 \draw[fill=black!100,line width=1] (-30:2.5) circle (.2);
 \draw[fill=black!100,line width=1] (90:2.5) circle (.2);
 \draw[fill=black!100,line width=1] (-150:2.5) circle (.2);
 \draw[fill=yellow!100,line width=1] (-30:.83) circle (.2);
 \draw[fill=yellow!100,line width=1] (90:.83) circle (.2);
 \draw[fill=yellow!100,line width=1] (-150:.83) circle (.2);
  \draw[fill=orange!100,line width=1] (-30:1.66) circle (.2);
 \draw[fill=orange!100,line width=1] (90:1.66) circle (.2);
 \draw[fill=orange!100,line width=1] (-150:1.66) circle (.2);
 \draw[fill=white!100,line width=1] (0,0) circle (.2);
\end{scope}

\end{tikzpicture}

\caption{Top: $K_2$ and $\mu_2(K_2)$ drawn with vertical levels with the root at the top. Bottom: $K_3$ and $\mu_2(K_3)$ drawn with concentric levels with the root in the middle.}
\label{fig:GenMyExamplesGen}

\end{figure}
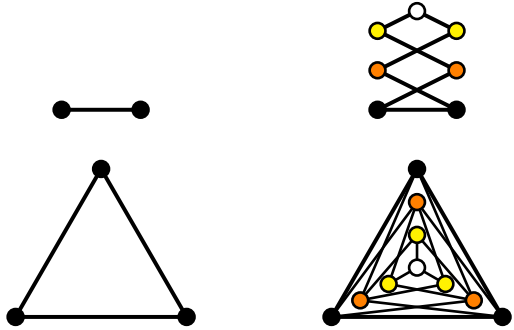

As one might suspect, Observations \ref{obs:auts}, \ref{obs:twins1}, \ref{obs:twins2}, and \ref{obs:twins3}, as well as Lemma \ref{lem:A&Sfixed}, for traditional Mycielskian graphs extend to the generalized Mycielskians with only minor changes. 

\begin{obs}\label{obs:auts'} \it
 An automorphism $\alpha$ of $G$ induces an automorphism $\widehat \alpha$ on $\gm$ by replicating the action of $\alpha$ on each level of $\gm$. That is, we can define the automorphism $\widehat \alpha$ of $\gm$ by $\widehat\alpha(u^0_i) = \alpha(v_i) = v_j$ and $\widehat \alpha(u^s_i) = u^s_j$ for each $1\leq s \leq t$ and $\widehat\alpha(w)=w$. 
\end{obs}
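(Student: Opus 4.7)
The plan is to verify directly that the prescribed map $\widehat\alpha$ is a bijection on $V(\gm)$ that preserves adjacencies, mirroring the proof of Observation~\ref{obs:auts} but tracking the additional shadow levels. Since the vertex set of $\gm$ partitions into $t{+}1$ level-sets $\{u_i^s : 1\le i\le n\}$ for $s = 0,1,\ldots,t$, together with the singleton $\{w\}$, and $\widehat\alpha$ acts on each level-set as a relabeled copy of $\alpha$ while fixing $w$, bijectivity is immediate from the bijectivity of $\alpha$ on $V(G)$.

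For edge preservation, I would check each type of edge in $\gm$ separately, using the edge description given just before the observation. First, edges within level $0$ are exactly the edges of $G$, and $\widehat\alpha$ restricted to level $0$ is $\alpha$, so these are preserved. Second, an edge between consecutive levels has the form $u_i^s u_j^{s+1}$ with $v_i v_j \in E(G)$; applying $\widehat\alpha$ yields $u_{i'}^s u_{j'}^{s+1}$, where $\alpha(v_i)=v_{i'}$ and $\alpha(v_j)=v_{j'}$, and since $\alpha$ is an automorphism of $G$ we have $v_{i'}v_{j'}\in E(G)$, so this is again an edge of $\gm$. Third, the edges $u_i^t w$ incident to the shadow master are sent to $u_{i'}^t w$, which is also an edge by construction. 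The argument for non-edges is symmetric (equivalently, apply the same reasoning to $\widehat{\alpha^{-1}}$ to conclude $\widehat\alpha$ is an injective edge-preserving map between finite graphs of the same size, hence an automorphism).

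There is no real obstacle here: the map is defined level-by-level precisely so that each class of edges in the layered construction is handled by a single application of the fact that $\alpha$ preserves edges of $G$. The only thing to be careful about is bookkeeping of the index $s$ so that the two endpoints of a cross-level edge are assigned to the correct layers after applying $\widehat\alpha$, but this is automatic because $\widehat\alpha$ does not move vertices between levels.
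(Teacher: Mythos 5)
Your verification is correct, and it is precisely the routine level-by-level check that the paper leaves implicit by presenting this as an unproved Observation (mirroring Observation~\ref{obs:auts}): bijectivity on each level-set plus the case analysis of the three edge types of $\gm$. No discrepancy with the paper's (omitted) argument; your closing remark about non-edges via $\widehat{\alpha^{-1}}$ or edge-counting on a finite graph properly closes the one loose end.
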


\begin{obs}\label{obs:twins1'} \it
If $G$ has no isolated vertices, twin vertices in $\mu_t(G)$ must be vertices at the same level. That is, twin vertices have the form $u_i^s$ and $u_j^s$ for some $0 \le s \le t$. 
\end{obs}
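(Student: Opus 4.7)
The plan is to assign each vertex $x$ of $\mu^{(t)}(G)$ other than $w$ a \emph{level signature} $\lambda(x)\subseteq\{0,1,\dots,t\}\cup\{*\}$ recording the levels at which $x$ has a neighbor, adjoining the symbol $*$ precisely when $w$ is a neighbor of $x$. Because $G$ has no isolated vertices, $N_G(v_i)\ne\varnothing$ for every $i$, so every entry about to be listed is actually realized by some neighbor. Unpacking the adjacencies of the generalized Mycielskian gives $\lambda(u_i^0)=\{0,1\}$, $\lambda(u_i^s)=\{s-1,s+1\}$ for $1\le s\le t-1$, $\lambda(u_i^t)=\{t-1,*\}$, and $\lambda(w)=\{t\}$.

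Now suppose $u_i^{s_1}$ and $u_j^{s_2}$ are twins with $s_1<s_2$; since their open neighborhoods agree, $\lambda(u_i^{s_1})=\lambda(u_j^{s_2})$. I would split into three cases based on $s_1$. When $s_1\ge 2$, both signatures consist of entries of the form $\{a-1,a+1\}$ or $\{t-1,*\}$, so comparing minimum numeric entries gives $s_1-1=s_2-1$, contradicting $s_1<s_2$. When $s_1=1$, the entry $0$ lies in $\lambda(u_i^1)$, yet for any $s_2\ge 2$ the minimum numeric entry of $\lambda(u_j^{s_2})$ is $s_2-1\ge 1$, so $0\notin\lambda(u_j^{s_2})$. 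When $s_1=0$, the entry $1\in\lambda(u_i^0)=\{0,1\}$ forces $s_2=2$ (the only $s_2>0$ with $1\in\lambda(u_j^{s_2})$), but then $\lambda(u_j^2)$ is either $\{1,3\}$ or $\{1,*\}$, neither equal to $\{0,1\}$. Finally, $\lambda(w)=\{t\}$ has only one numeric entry and no $*$, whereas every $\lambda(u_i^s)$ has two entries (possibly including $*$), so $w$ cannot be a twin of any $u_i^s$.

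The main obstacle is not conceptual but the careful handling of the boundary levels $s=0$ and $s=t$, which behave differently from intermediate levels. The hypothesis that $G$ has no isolated vertices is invoked precisely to guarantee each listed entry of $\lambda(u_i^s)$ is genuinely present; if $v_i$ were isolated, the corresponding shadow vertices would have empty neighborhoods at some of their expected levels, opening the door to twin pairs across different levels, consistent with the remark the excerpt makes immediately preceding this observation.
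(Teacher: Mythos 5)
Your argument is correct: twins have equal open neighborhoods, hence equal ``level signatures,'' and your case analysis does verify that the signatures $\{0,1\}$, $\{s{-}1,s{+}1\}$, $\{t{-}1,*\}$, $\{t\}$ are pairwise distinct across levels, with the no-isolated-vertices hypothesis used exactly where it must be, namely to guarantee each listed level is actually realized by a neighbor. The paper states this as an observation without proof, but the route it implicitly has in mind is visible in the proof of Lemma~\ref{lem:GenA&Sfixed}: when $G$ has no isolated vertices, the level of a vertex of $\mu^{(t)}(G)$ is determined by its distance from the shadow master $w$ (level $s$ sits at distance $t+1-s$), and two twins are equidistant from every third vertex (a shortest path from $w$ to $x$ ends at a common neighbor of $x$ and its twin $y$, so $d(w,x)=d(w,y)$), which pins them to the same level in one line. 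Your first-neighborhood signature argument is more elementary in that it never leaves the adjacency data, at the price of a boundary-case analysis at levels $0$ and $t$; the distance argument is shorter and reuses machinery the paper needs anyway, but both are complete proofs of the observation.
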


\begin{obs}\label{obs:twins2'} \it
If original vertices $v_i$ and $v_j$ are twins in $G$, then $u_i^s$ and $u_j^s$ are twins in $\mu_t(G)$ for all $0 \le s \le t$.
\end{obs}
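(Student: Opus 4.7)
The plan is to read the observation directly off the definition of $\mu^{(t)}(G)$: at every level $s$, the open neighborhood of $u_i^s$ is determined entirely by $N_G(v_i)$ together with the value of $s$, so the hypothesis $N_G(v_i)=N_G(v_j)$ instantly gives $N(u_i^s)=N(u_j^s)$ at every level.

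Concretely, I would split into the three natural cases for $s$. For $s=0$, the definition of the edges $v_iv_j$ and $u_j^1 u_i^0$ in $\mu^{(t)}(G)$ yields
\[ N(u_i^0)=\{u_k^0:v_k\in N_G(v_i)\}\cup\{u_k^1:v_k\in N_G(v_i)\}. \]
For $1\le s\le t-1$, the edges $u_i^s u_j^{s-1}$ and $u_i^s u_j^{s+1}$ give
\[ N(u_i^s)=\{u_k^{s-1}:v_k\in N_G(v_i)\}\cup\{u_k^{s+1}:v_k\in N_G(v_i)\}. \]
For $s=t$, we further pick up the shadow master:
\[ N(u_i^t)=\{u_k^{t-1}:v_k\in N_G(v_i)\}\cup\{w\}. \]
In every case the right-hand side depends on $i$ only through the set $N_G(v_i)$. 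Replacing $i$ by $j$ and invoking $N_G(v_i)=N_G(v_j)$ therefore yields $N(u_i^s)=N(u_j^s)$, so $u_i^s$ and $u_j^s$ are twins at each level $s$.

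The only subtlety worth flagging is that open-neighborhood twins in a simple graph are necessarily non-adjacent: if $v_j\in N_G(v_i)$, then $v_i\in N_G(v_j)=N_G(v_i)$, contradicting the absence of loops. This ensures that $u_i^s$ and $u_j^s$ do not accidentally appear in the displayed sets we are writing down as their own neighborhoods, so the three descriptions are genuine open neighborhoods. Beyond this sanity check, the argument is pure bookkeeping against the construction, and I anticipate no real obstacle; the only point requiring care is to cover the levels $s\in\{0\}$, $\{1,\dots,t-1\}$, and $\{t\}$ separately, since the top and bottom levels involve the ``missing'' neighbors below $s=0$ and the shadow master above $s=t$.
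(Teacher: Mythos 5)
Your proof is correct and is essentially the argument the paper intends: the observation is stated without proof, but your case analysis of $N(u_i^s)$ for $s=0$, $0<s<t$, and $s=t$ reproduces exactly the neighborhood formula the paper displays later in the proof of Lemma~\ref{lem:GenA&Sfixed}, and the conclusion follows immediately from $N_G(v_i)=N_G(v_j)$. The non-adjacency remark is a nice sanity check but not strictly needed.
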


\begin{obs}\label{obs:twins3'}\it
If $\{u_i^s,u_j^s\}$ are twins in $\mu_t(G)$ for any $0 \le s \le t$, then they are twins for all such $s$. In particular, $\{v_i, v_j\}$ are twins in $G$.
\end{obs}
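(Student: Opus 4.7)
The plan is to observe that every shadow vertex's neighborhood in $\mu^{(t)}(G)$ is essentially a disjoint union of ``copies'' of $N_G(v_i)$ placed at adjacent levels, together with possibly $w$. Specifically, by the definition of the generalized Mycielskian, for any $0 \le s \le t$ the neighborhood of $u_i^s$ decomposes as neighbors at level $s-1$ and neighbors at level $s+1$, where we interpret level $-1$ as empty (using $u_i^0 = v_i$ and self-loops in $N(v_i)$ at level $0$) and level $t+1$ as $\{w\}$. In each case, the set of level-$(s{\pm}1)$ neighbors of $u_i^s$ is exactly $\{u_k^{s\pm 1} : v_k \in N_G(v_i)\}$.

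Next, I would assume $\{u_i^s, u_j^s\}$ are twins in $\mu^{(t)}(G)$ for some fixed $0 \le s \le t$. Since twins share the same neighborhood and neighborhoods of level-$s$ vertices are contained in the union of levels $s-1$, $s+1$, and possibly $\{w\}$, I can compare the level-by-level restrictions. If $s<t$, comparing the level-$(s+1)$ neighbors of $u_i^s$ and $u_j^s$ gives $\{u_k^{s+1} : v_k \in N_G(v_i)\} = \{u_k^{s+1} : v_k \in N_G(v_j)\}$, and hence $N_G(v_i) = N_G(v_j)$. If $s = t$, I do the same comparison with level $t-1$ instead (which exists because $t \ge 1$). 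So $\{v_i, v_j\}$ are twins in $G$, which establishes the second claim of the observation.

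Finally, once $v_i$ and $v_j$ are shown to be twins in $G$, Observation~\ref{obs:twins2'} immediately yields that $u_i^{s'}$ and $u_j^{s'}$ are twins in $\mu^{(t)}(G)$ for every $0 \le s' \le t$, completing the proof. The only delicate part is handling the boundary levels $s=0$ and $s=t$ cleanly, since the neighborhood formula is slightly different there (no level below $0$, and the shadow master $w$ appearing at level $t$); but because $w$ is adjacent to $u_i^t$ and $u_j^t$ simultaneously, it contributes identically to both neighborhoods and does not obstruct the argument. I do not expect any substantial difficulty beyond bookkeeping of the level indices.
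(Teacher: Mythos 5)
Your argument is correct and is exactly the reasoning the paper leaves implicit: this statement is given as an unproved observation, and the level-by-level decomposition of $N(u_i^s)$ you use is the same formula the authors display later in the proof of Lemma~\ref{lem:GenA&Sfixed}, so comparing one adjacent level (level $s{+}1$ if $s<t$, level $t{-}1$ if $s=t$) to recover $N_G(v_i)=N_G(v_j)$ and then invoking Observation~\ref{obs:twins2'} is precisely the intended route. The only blemish is your description of level $0$ --- there are no self-loops; $u_i^0=v_i$ simply has neighbors at both level $0$ and level $1$ --- but since you only ever compare the level-$(s{+}1)$ or level-$(t{-}1)$ slice of the neighborhoods, this does not affect the proof.
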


Boutin, Cockburn, Keough, Loeb, Perry and Rombach prove in \cite{BCKLPR2020} that if $G$ is not $K_{1,m}$ for $m\geq 0$, then all automorphisms of $\mu_t(G)$ fix the root.

\begin{lemma} \label{lem:phiwisv} \cite{BCKLPR2020}
Let $G$ be a graph and let $t \geq 1$. Let $\widehat \alpha$ be an automorphism of $\mu_t(G)$.
\begin{enumerate}[(i)] 
  \item \label{lem:phiwisv:i}If $G = K_{1,1} = K_2$, then $\mu_t(G) = C_{2t{+}3}$, and $\widehat\alpha(w)$ can be any vertex. 
 \item \label{lem:phiwisv:ii}If $G=K_{1,m}$ with $m \neq 1$ and $v$ is the vertex of degree $m$ in $G$, then $\widehat\alpha(w) \in \{w,u^t\}$, where $u^t$ is the top level shadow of $v$.
  \item \label{lem:phiwisv:iii} If $G \neq K_{1,m}$ for any $m \ge 0$, then $\widehat\alpha(w) = w$.
\end{enumerate}
\end{lemma}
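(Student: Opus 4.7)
My plan is to handle the three parts of the lemma in sequence, with the third being the substantive one.

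For part~\ref{lem:phiwisv:i}, a routine inspection shows that every vertex of $\mu^{(t)}(K_2)$ has degree exactly $2$: each of the two originals has one same-level neighbor and one level-$1$ shadow neighbor; each internal-level shadow has one neighbor on each adjacent level; each top shadow has one level-$(t-1)$ neighbor and $w$; and $w$ has two top-shadow neighbors. Since the graph is connected on $2t+3$ vertices, $\mu^{(t)}(K_2)\cong C_{2t+3}$, an odd cycle, which is vertex-transitive, so $\widehat\alpha(w)$ may be any vertex.

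For part~\ref{lem:phiwisv:ii}, the plan is a degree count. Using $\deg(u_i^s)=2\deg_G(v_i)$ for $0\le s\le t-1$ and $\deg(u_i^t)=\deg_G(v_i)+1$, in $\mu^{(t)}(K_{1,m})$ the vertices $w$ and $u^t$ (the top shadow of the star's center) both have degree $m+1$, while every other vertex has degree $2$ or $2m$. When $m\neq 1$ we have $m+1\notin\{2,2m\}$, so degree preservation under $\widehat\alpha$ forces $\widehat\alpha(w)\in\{w,u^t\}$. The $m=0$ case is analogous, with degree values $1$ and $0$.

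For part~\ref{lem:phiwisv:iii}, assume $G\ne K_{1,m}$ for every $m\ge 0$ and suppose for contradiction that $\widehat\alpha(w)=x\ne w$. Writing $N^2(x)$ for the set of vertices at distance exactly $2$ from $x$, two structural properties of $w$ must pass to $x$: $\deg(x)=n$, and $N(x)$ induces an independent set. Together with the degree formulas this leaves two possibilities: \emph{(A)} $x=u_i^s$ with $0\le s\le t-1$ and $\deg_G(v_i)=n/2$, or \emph{(B)} $x=u_i^t$ with $\deg_G(v_i)=n-1$. I would eliminate case~\emph{(B)} by comparing sizes of second neighborhoods: $|N^2(w)|=n$ since $G$ has no isolated vertices, while a direct count yields
\[
 |N^2(u_i^t)| = \bigl|\{v_k : v_k \text{ has a neighbor other than } v_i\}\bigr| + (n-1).
\]
Matching this with $n$ forces exactly one such $v_k$, necessarily $v_i$ itself; hence all other vertices are leaves pendant to $v_i$ and $G=K_{1,n-1}$, contradicting the assumption.

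Case~\emph{(A)} is the main obstacle. My plan is to compare the induced subgraphs $\gm[N[x]\cup N^2(x)]$ and $\gm[N[w]\cup N^2(w)]$, which must be isomorphic under $\widehat\alpha$. The subgraph at $w$ lives on only two levels ($t-1$ and $t$, together with $w$), realizing a bipartite structure that encodes the edges of $G$ via the correspondence $u_i^t u_j^{t-1}\in E(\gm)\iff v_i v_j\in E(G)$, with exactly $n+2|E(G)|$ edges. The subgraph at $u_i^s$ spans the levels $\{s-2,s-1,s,s+1,s+2\}\cap\{0,\ldots,t\}$; since $v_i$ has at least one $G$-neighbor, two-step walks from $u_i^s$ back to level $s$ populate that level with same-level vertices carrying edges to levels $s\pm 1$, yielding a genuinely multi-level adjacency pattern absent from the two-level structure at $w$. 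The delicate point is verifying that this discrepancy truly arises for every non-star $G$ with no isolated vertices, even when $\deg_G(v_i)=n/2$ and $v_i$'s local structure is highly constrained; if the induced-subgraph comparison does not suffice in boundary cases, one can refine the argument by passing to finer invariants such as the number of short cycles through $x$ or the full distance profile of $x$.
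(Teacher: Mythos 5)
First, a point of reference: this lemma is quoted from \cite{BCKLPR2020} and the present paper contains no proof of it, so there is no in-paper argument to compare against; I am evaluating your proposal on its own. Parts (i) and (ii) are correct and complete: the $2$-regularity computation for $\mu^{(t)}(K_2)$ and the degree count isolating $\{w,u^t\}$ in $\mu^{(t)}(K_{1,m})$ both check out (including $m=0$). In part (iii), the reduction via $\deg(x)=n$ and independence of $N(x)$ to your cases (A) and (B) is also sound. One repairable slip: your formula for $|N^2(u_i^t)|$ in case (B) is only valid for $t\ge 2$; when $t=1$ one gets $|N^2(u_i^1)|=n=|N^2(w)|$, so the second-neighborhood count does not eliminate (B) there. (The case is still dead for $t=1$, but only because independence of $N(u_i^1)=N_G(v_i)\cup\{w\}$ forces $G-v_i$ to be edgeless, hence $G=K_{1,n-1}$; you should say this explicitly rather than rely on the count.)

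The genuine gap is case (A), which you yourself flag but do not close. The qualitative contrast you draw --- a ``two-level'' ball around $w$ versus a ``genuinely multi-level'' ball around $u_i^s$ --- is not a valid distinguishing feature: for $2\le s\le t-1$ the ball of radius $2$ around $u_i^s$ has exactly the same radial shape as the ball around $w$ (independent first neighborhood, independent second neighborhood, every edge from $N^2(x)$ landing in $N(x)$), so any proof must actually compare the bipartite graphs between first and second neighborhoods, which you never do. Moreover, the invariants you do compute genuinely fail to suffice. Take $G$ on $n=8$ vertices $v_0,\dots,v_4,x_1,x_2,x_3$ with edges $v_0v_1,v_0v_2,v_0v_3,v_0v_4,v_1v_2$ and a triangle on $x_1,x_2,x_3$: this $G$ has no isolated vertices and is not a star, $\deg_G(v_0)=4=n/2$, and for $2\le s\le t-3$ the vertex $u_0^s$ satisfies $\deg(u_0^s)=n$, $N(u_0^s)$ independent, and $|N^2(u_0^s)|=3+3+2=8=n=|N^2(w)|$. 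So degree, neighborhood independence, and second-neighborhood size all coincide with those of $w$, and your argument as written cannot rule out $\widehat\alpha(w)=u_0^s$. (The balls are in fact non-isomorphic here --- the degree sequences within the induced balls differ --- but that is exactly the finer comparison your proposal defers with ``one can refine the argument,'' and it is the entire content of the hard case. Until that comparison is carried out uniformly for all non-star $G$ without isolated vertices, including $s=0,1$ where level-$0$ edges enter the ball, part (iii) is not proved.)
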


In essence, Lemma~\ref{lem:phiwisv} states that the only graphs $G$ that have automorphisms of $\gm$ that do not fix $w$ are the star graphs $K_{1,m}$. Figure~\ref{fig:mutK13} shows the graphs $ K_{1,3}$ and $\mu_t(K_{1,3})$ for $t = 1, 2, 3$. The vertical reflectional symmetries in the drawings of $\mu_t(K_{1,3})$ in Figure~\ref{fig:mutK13} correspond to automorphisms that move the root to the top level shadow of the central vertex in $K_{1,3}$.

\begin{figure}[h]
  \centering
   \begin{tikzpicture}[scale=.5]
\draw[fill=black!100,line width=1] (0,0) circle (.2);
\foreach \j in {1,...,3}
{\draw[fill=black!100,line width=1] (120*\j:1) circle (.2);\draw[black!100,line width=1.5pt] (120*\j:1) -- (0,0);
}
\begin{scope}[shift={(5,-2)}]
\draw[black!100,line width=1.5pt] (-3,0) -- (3,0); 
\draw[black!100,line width=1.5pt] (-3,4) -- (3,4); 
\draw[black!100,line width=1.5pt] (-3,0) -- (-3,4); 
\draw[black!100,line width=1.5pt] (3,0) -- (3,4); 
\draw[black!100,line width=1.5pt] (0,0) -- (2,1.3); 
\draw[black!100,line width=1.5pt] (0,0) -- (1,2.6); 
\draw[black!100,line width=1.5pt] (3,4) -- (2,1.3); 
\draw[black!100,line width=1.5pt] (3,4) -- (1,2.6);
\draw[black!100,line width=1.5pt] (0,0) -- (-2,1.3); 
\draw[black!100,line width=1.5pt] (0,0) -- (-1,2.6); 
\draw[black!100,line width=1.5pt] (-3,4) -- (-2,1.3);
\draw[black!100,line width=1.5pt] (-3,4) -- (-1,2.6);
\draw[fill=black!100,line width=1] (0,0) circle (.2);
\draw[fill=black!100,line width=1] (-3,0) circle (.2);
\draw[fill=orange!100,line width=1] (3,0) circle (.2);
\draw[fill=white!100,line width=1] (3,4) circle (.2);
\draw[fill=orange!100,line width=1] (-3,4) circle (.2);
\draw[fill=orange!100,line width=1] (2,1.3) circle (.2);
\draw[fill=orange!100,line width=1] (1,2.6) circle (.2);
\draw[fill=black!100,line width=1] (-2,1.3) circle (.2);
\draw[fill=black!100,line width=1] (-1,2.6) circle (.2);
\end{scope}
\begin{scope}[shift={(20,-5)}]
\draw[black!100,line width=1.5pt] (-3,2) -- (3,2); 
\draw[black!100,line width=1.5pt] (-3,8) -- (3,8); 
\draw[black!100,line width=1.5pt] (-3,2) -- (-3,8); 
\draw[black!100,line width=1.5pt] (3,2) -- (3,8); 
\draw[black!100,line width=1.5pt] (0,2) -- (2,2.7); 
\draw[black!100,line width=1.5pt] (0,2) -- (1,3.4); 
\draw[black!100,line width=1.5pt] (3,4) -- (2,2.7); 
\draw[black!100,line width=1.5pt] (3,4) -- (1,3.4);
\draw[black!100,line width=1.5pt] (0,2) -- (-2,2.7); 
\draw[black!100,line width=1.5pt] (0,2) -- (-1,3.4); 
\draw[black!100,line width=1.5pt] (-3,4) -- (-2,2.7);
\draw[black!100,line width=1.5pt] (-3,4) -- (-1,3.4);
\draw[black!100,line width=1.5pt] (-3,4) -- (-2,6) -- (-3,8) -- (-1,6) -- (-3,4);
\draw[black!100,line width=1.5pt] (3,4) -- (2,6) -- (3,8) -- (1,6) -- (3,4);
\draw[fill=black!100,line width=1] (0,2) circle (.2);
\draw[fill=black!100,line width=1] (-3,2) circle (.2);
\draw[fill=purple!100,line width=1] (3,2) circle (.2);
\draw[fill=orange!100,line width=1] (3,4) circle (.2);
\draw[fill=purple!100,line width=1] (-3,4) circle (.2);
\draw[fill=purple!100,line width=1] (2,2.7) circle (.2);
\draw[fill=purple!100,line width=1] (1,3.4) circle (.2);
\draw[fill=black!100,line width=1] (-2,2.7) circle (.2);
\draw[fill=black!100,line width=1] (-1,3.4) circle (.2);
\draw[fill=yellow!100,line width=1] (-3,8) circle (.2);
\draw[fill=white!100,line width=1] (3,8) circle (.2);
\draw[fill=orange!100,line width=1] (-3,6) circle (.2);
\draw[fill=yellow!100,line width=1] (3,6) circle (.2);
\draw[fill=yellow!100,line width=1] (2,6) circle (.2);
\draw[fill=yellow!100,line width=1] (1,6) circle (.2);
\draw[fill=orange!100,line width=1] (-2,6) circle (.2);
\draw[fill=orange!100,line width=1] (-1,6) circle (.2);
\end{scope}
\begin{scope}[shift={(12.5,-6)}]
\draw[black!100,line width=1.5pt] (-3,8) -- (3,8); 
\draw[black!100,line width=1.5pt] (-3,8) -- (-3,4) -- (3,4);
\draw[black!100,line width=1.5pt] (3,4) -- (0,4.7) -- (-3,4); 
\draw[black!100,line width=1.5pt] (-3,4) -- (0,5.4) -- (3,4) -- (3,8); 
\draw[black!100,line width=1.5pt] (-3,4) -- (-2,6) -- (-3,8) -- (-1,6) -- (-3,4);
\draw[black!100,line width=1.5pt] (3,4) -- (2,6) -- (3,8) -- (1,6) -- (3,4);
\draw[fill=black!100,line width=1] (3,4) circle (.2);
\draw[fill=black!100,line width=1] (0,4) circle (.2);
\draw[fill=black!100,line width=1] (0,4.7) circle (.2);
\draw[fill=black!100,line width=1] (0,5.4) circle (.2);
\draw[fill=orange!100,line width=1] (-3,4) circle (.2);
\draw[fill=white!100,line width=1] (-3,8) circle (.2);
\draw[fill=yellow!100,line width=1] (3,8) circle (.2);
\draw[fill=yellow!100,line width=1] (-3,6) circle (.2);
\draw[fill=orange!100,line width=1] (3,6) circle (.2);
\draw[fill=orange!100,line width=1] (2,6) circle (.2);
\draw[fill=orange!100,line width=1] (1,6) circle (.2);
\draw[fill=yellow!100,line width=1] (-2,6) circle (.2);
\draw[fill=yellow!100,line width=1] (-1,6) circle (.2);
\end{scope}
\end{tikzpicture}
  \caption{The graphs $ K_{1,3}$ and $\mu_t(K_{1,3})$ for $t = 1, 2, 3$. In each, the vertices of matching color are at the same level.} \label{fig:mutK13}
\end{figure}
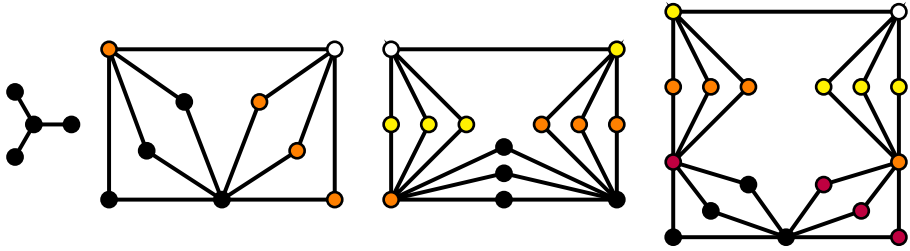

The next lemma is an extension of Lemma~\ref{lem:A&Sfixed} to generalized Mycielskian graphs.

 \begin{lemma} \label{lem:GenA&Sfixed}
 Let $G \neq K_{1,m}$ for any $m \geq 0$ be a graph on $n$ vertices that has no isolated vertices and let $t \geq 1$. If $\widehat \alpha$ is an automorphism of $\mu_t(G)$, then
 \begin{enumerate}[(i)]
 \item\label{lem:GenA&Sfixed:levels} $\widehat \alpha$ preserves the level of vertices, so, $\widehat\alpha(\{u_1^s \dots , u_n^s\}) = \{u_1^s \dots , u_n^s\}$ for all $0 \le s \le t$;
 \item\label{lem:GenA&Sfixed:restrict} $\widehat\alpha$ restricted to $\{u_1^0, \dots , u_n^0\} = \{v_1, \dots, v_n\}$ is an automorphism $\alpha$ of $G$;
 \item\label{lem:GenA&Sfixed:shadows} if, in addition, $G$ is twin-free, then $\alpha(v_i) = v_j$ if and only if $\widehat \alpha(u_i^s) = u_j^s$ for all $0 <s \le t$.
 \end{enumerate}
 \end{lemma}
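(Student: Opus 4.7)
The starting point is Lemma~\ref{lem:phiwisv}(\ref{lem:phiwisv:iii}): since $G \neq K_{1,m}$, every automorphism $\widehat\alpha$ of $\mu^{(t)}(G)$ satisfies $\widehat\alpha(w)=w$. The whole argument is then driven by the fact that, because $\widehat\alpha$ is an isometry fixing $w$, the level of a vertex is encoded by its distance from $w$.

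For part~(\ref{lem:GenA&Sfixed:levels}), the plan is to show that $d(w, u_i^s) = t+1-s$ for every $1 \le i \le n$ and $0 \le s \le t$. For the upper bound, I will build an explicit path: because $G$ has no isolated vertices, every $v_i$ has some neighbor, and by repeatedly alternating between $v_i$ and that neighbor we obtain a walk $v_{j_0}, v_{j_1}, \ldots, v_{j_{t-s}} = v_i$ of length $t-s$ in $G$, which lifts to a path $w, u_{j_0}^t, u_{j_1}^{t-1}, \ldots, u_i^s$ of length $t+1-s$ in $\mu^{(t)}(G)$. For the lower bound, I will use the local structure: any path from $w$ must begin $w \to u_{j_0}^t$, and from any shadow vertex $u_j^r$ with $1 \le r \le t-1$ the only neighbors lie at levels $r\pm 1$; so after the first edge, each subsequent step changes the level by $\pm 1$, forcing at least $t-s$ more edges to reach level $s$. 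Since $\widehat\alpha$ preserves distance from $w$, it must preserve the set of vertices at each level.

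Part~(\ref{lem:GenA&Sfixed:restrict}) is then immediate: by~(\ref{lem:GenA&Sfixed:levels}), $\widehat\alpha$ restricts to a bijection on $\{v_1,\ldots,v_n\}$, and because the induced subgraph of $\mu^{(t)}(G)$ on level $0$ is exactly $G$, this restriction is an automorphism $\alpha$ of $G$.

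Part~(\ref{lem:GenA&Sfixed:shadows}) I will prove by induction on $s$, assuming $G$ is twin-free. For the base case $s=1$, observe that the neighbors of $u_i^1$ at level $0$ are precisely $\{v_\ell : \ell \in N_G(v_i)\}$; writing $\widehat\alpha(u_i^1)=u_k^1$ and applying $\widehat\alpha$ to these neighbors (using that $\widehat\alpha$ acts on level $0$ as $\alpha$ by~(\ref{lem:GenA&Sfixed:restrict})), we get $\alpha(N_G(v_i))=N_G(v_k)$. But $\alpha(N_G(v_i))=N_G(\alpha(v_i))=N_G(v_j)$, so $N_G(v_k)=N_G(v_j)$, and twin-freeness of $G$ forces $k=j$. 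For the inductive step, assume $\widehat\alpha(u_i^{s-1}) = u_{\alpha(v_i)}^{s-1}$ for all $i$; the neighbors of $u_i^s$ at level $s-1$ are $\{u_\ell^{s-1} : \ell \in N_G(v_i)\}$, which map under $\widehat\alpha$ to $\{u_m^{s-1} : v_m \in N_G(v_j)\}$, and the same twin-freeness argument identifies $\widehat\alpha(u_i^s)$ as $u_j^s$. The converse direction of the biconditional then follows because each map $\widehat\alpha|_{\text{level }s}$ is a bijection.

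The main obstacle is the distance lower bound in part~(\ref{lem:GenA&Sfixed:levels}): one must rule out shortcuts that wander up and down between levels. This is handled cleanly by the parity/telescoping observation that each edge (other than the first) changes the level by exactly $\pm 1$, so a walk from $w$ of length $L$ reaches only levels $\ge t+1-L$. The no-isolated-vertex hypothesis is what makes the matching upper bound achievable, which is why that assumption is essential.
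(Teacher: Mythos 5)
Your proposal is correct and follows essentially the same route as the paper: fix $w$ via Lemma~\ref{lem:phiwisv}(\ref{lem:phiwisv:iii}), identify levels by distance from $w$ (you spell out the computation $d(w,u_i^s)=t+1-s$ that the paper only asserts), and derive part~(iii) from the principle that, under twin-freeness, a vertex is determined by its open neighborhood. The only cosmetic difference is in part~(iii), where you run a level-by-level induction using the down-neighbors and twin-freeness of $G$, whereas the paper invokes twin-freeness of $\mu^{(t)}(G)$ itself and compares full open neighborhoods; both reduce to the same neighborhood comparison.
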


 \begin{proof}
Since $G \neq K_{1,m}$ for any $m\geq 0$, by Lemma~\ref{lem:phiwisv}(\ref{lem:phiwisv:iii}), any automorphism of $\gm$ fixes the root. Furthermore, since $G$ has no isolated vertices, the level of every vertex $x$ in $\gm$ can be  determined by its distance from $w$. That is, in $\mu_t(G)$, $x = w$ if and only if the distance of $x$ from $w$ is $0$, and $x$ is at level $s$ if and only if its distance from $w$ is {$t {+} 1 {-} s$}. 
  Since distance is preserved by automorphisms, any automorphism that fixes the root preserves the level of every other vertex. This proves~(\ref{lem:GenA&Sfixed:levels}) and~(\ref{lem:GenA&Sfixed:restrict}).

 For (\ref{lem:GenA&Sfixed:shadows}),  we mirror and extend the argument given by Alikhani and Soltani in~\cite{AS2018}. Focusing first on $\alpha$, if $\alpha(v_i)= v_j$, then $\alpha$ maps the open neighborhood of $v_i$ to the open neighborhood of $v_j$. That is, 
$\alpha(v_i)= v_j$ implies $
 \alpha(N_G(v_i))= N_G(v_j).$
 Further, since $G$ is twin-free, every vertex in $G$ can be uniquely identified by its open neighborhood. Thus, $\alpha(N_G(v_i)) = N_G(v_j)$ implies $\alpha(v_i)= v_j$. This gives us the biconditional statement $\alpha(N_G(v_i)) = N_G(v_j)$ if and only if $\alpha(v_i)= v_j$.

 Since $G$ has no isolated vertices and is twin-free, Observations~\ref{obs:twins1'} and \ref{obs:twins3'} imply that $\gm$ is also twin-free, so each vertex of $\gm$ can be uniquely identified by its open neighborhood. Hence, $\widehat \alpha(u_i^s) = u_j^s$ if and only if 
 $\widehat\alpha(N(u_i^s)) = N(u_j^s)$. By definition, 
 \[
 N(u_i^s) = \begin{cases}
 \{ u_k^0, u_k^1 \mid v_k \in N_G(v_i)\}, \quad &\text{ if } s=0,\\
 \{ u_k^{s{-}1}, u_k^{s{+}1} \mid v_k \in N_G(v_i)\}, \quad &\text{ if } 0 < s < t,\\
  \{ u_k^{t{-}1} \mid v_k \in N_G(v_i)\} \cup \{w\}, &\text{ if } s=t.
 \end{cases}
 \]
 Thus, the open neighborhood of $u_i^s$ in $\gm$ is completely determined by the open neighborhood of $u_i^0 = v_i$ in $G$. This, in turn, implies that $\widehat \alpha(N(u_i^s)) = N(u_j^s)$ if and only if $\alpha(N_G(v_i)) = N_G(v_j)$. Together with the biconditional statement from the previous paragraph~(\ref{lem:GenA&Sfixed:shadows}) follows.\end{proof}


\section{\boldmath ${\det}(\mu_t(G))$ and $\rho(\mu_t(G))$ for Twin-Free $G$} \label{sec:det&rhoTwinFree}

We begin this section by proving that for most twin-free graphs, the generalized Mycielski construction preserves determining number, in contrast to the proven effect of the Mycielski construction on distinguishing number. More explicitly, in \cite{BCKLPR2020}, the current authors proved that for $G\neq K_1, K_2$, $\dist(\mu(G))\leq \dist(G)$, and that these values may be arbitrarily far apart. For example, if $n \ge 3$ and $t \ge \log_2 (n-1)$, then $\dist(\mu_t(K_n)) = 2$, whereas $\dist(K_n) = n$.

\begin{thm}\label{thm:twinfreeDet}

 Let $G$ be a twin-free graph with no isolated vertices and let $t\geq 1$.
\begin{enumerate}[(i)]
  
   \item\label{thm:twinfreeDet:K2} If $G = K_2$, then $\det(G) = 1$ and $\det(\gm) = 2.$ 
   
  \item\label{thm:twinfreeDet:notK2} If $G \neq K_2$, then any (minimum size) determining set for $G$ is a (minimum size) determining set for $\gm$ and \[\det(\gm) = \det(G).\]
\end{enumerate}

\end{thm}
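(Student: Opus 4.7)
Part~(\ref{thm:twinfreeDet:K2}) is immediate: fixing either endpoint of $K_2$ pins the graph down, so $\det(K_2)=1$; and by Lemma~\ref{lem:phiwisv}(\ref{lem:phiwisv:i}), $\mu^{(t)}(K_2)=C_{2t+3}$, whose determining number is the well-known value $2$ (one fixed vertex admits the reflection through it, while two distinct vertices in an odd cycle leave only the identity).

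For Part~(\ref{thm:twinfreeDet:notK2}), the plan is to prove $\det(\gm)=\det(G)$ by a matched pair of inequalities, with the upper-bound argument itself exhibiting every minimum determining set of $G$, viewed inside the level-$0$ copy of $G$ in $\gm$, as a minimum determining set of $\gm$. The linchpin is that the hypotheses simultaneously rule out every star: if $G$ is twin-free with no isolated vertices and $G\ne K_2$, then $G\ne K_{1,m}$ for all $m\ge 0$, since $K_{1,0}=K_1$ is isolated, $K_{1,1}=K_2$ is excluded, and $K_{1,m}$ with $m\ge 2$ has mutually twin leaves. Hence Lemmas~\ref{lem:phiwisv}(\ref{lem:phiwisv:iii}) and~\ref{lem:GenA&Sfixed} apply in full.

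For $\det(\gm)\le\det(G)$, let $S$ be a minimum determining set of $G$ and suppose $\widehat\alpha\in\aut(\gm)$ fixes each vertex of $S$ at level $0$. By Lemma~\ref{lem:GenA&Sfixed}(\ref{lem:GenA&Sfixed:restrict}), $\widehat\alpha$ restricts on level $0$ to an automorphism $\alpha$ of $G$ fixing $S$ pointwise, so $\alpha$ is trivial. Lemma~\ref{lem:GenA&Sfixed}(\ref{lem:GenA&Sfixed:shadows}) then forces $\widehat\alpha(u_i^s)=u_i^s$ for every $i$ and every $1\le s\le t$, and Lemma~\ref{lem:phiwisv}(\ref{lem:phiwisv:iii}) gives $\widehat\alpha(w)=w$, so $\widehat\alpha$ is trivial and $S$ determines $\gm$. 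For the reverse inequality, given any determining set $T$ of $\gm$, set $T'=\{v_i\in V(G):u_i^s\in T\text{ for some }0\le s\le t\}$, so that $|T'|\le|T|$. If $\alpha\in\aut(G)$ fixes $T'$ pointwise, then the induced $\widehat\alpha$ of Observation~\ref{obs:auts'} fixes $w$ and every $u_i^s$ with $v_i\in T'$, hence fixes $T$; since $T$ determines $\gm$, $\widehat\alpha$ and therefore $\alpha$ must be trivial. Thus $T'$ determines $G$, and $\det(G)\le|T'|\le|T|=\det(\gm)$.

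The heavy lifting is packaged into the quoted structural lemmas; the only real obstacle is the bookkeeping check that the twin-free, non-$K_2$, no-isolated-vertex hypotheses jointly preclude every star $K_{1,m}$, which is precisely what licenses the clean back-and-forth between $\aut(G)$ and $\aut(\gm)$ via the projection $u_i^s\mapsto v_i$.
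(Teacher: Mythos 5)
Your proof is correct and follows essentially the same route as the paper's: both directions use Lemma~\ref{lem:GenA&Sfixed} to pass between $\aut(G)$ and $\aut(\gm)$, with the lower bound obtained by projecting a determining set of $\gm$ to level $0$. Your explicit check that the hypotheses exclude all stars $K_{1,m}$, and your handling of a possible $w$ in the determining set via the induced automorphism rather than by minimality, are minor presentational differences only.
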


\begin{proof}

For~(\ref{thm:twinfreeDet:K2}), if $G=K_2$, then $\det(G) = 1$ and $\det(\gm) = 2$, since $\mu_t(K_2) = C_{2t+3}$.

For~(\ref{thm:twinfreeDet:notK2}), let $S \subseteq V(G)$ be a determining set for $G$; we will show $S$, as a subset of $V(\gm)$, is also a determining set for $\mu_t(G)$. Let $\widehat \alpha\in\aut(\gm)$ and assume that $\widehat\alpha(s) = s$ for all $s \in S$. By Lemma~\ref{lem:GenA&Sfixed}(\ref{lem:GenA&Sfixed:restrict}), the restriction of $\widehat \alpha$ to $V(G)$ is an automorphism $\alpha\in\aut(G)$ and by Lemma~\ref{lem:GenA&Sfixed}(\ref{lem:GenA&Sfixed:shadows}), $\alpha(v_i) = v_j$ if and only if $\widehat\alpha(u_i^s) = u_j^s$ for all $0 \le s \le t$. By the assumptions that $\widehat \alpha$ fixes $S$ pointwise and that $S$ is a determining set for $G$, $\alpha$ is the identity on the original vertices. Thus, $\widehat \alpha(v_i)=\alpha(v_i) = v_i$ for all $v_i\in V(G)$ and hence, $\widehat\alpha(u_i^s) = u_i^s$ for all $i$ and all $s$. Thus, $S$ is a determining set for $\gm$ and so $\det(\gm)\leq \det(G)$.

Now, suppose instead that $S \subseteq V(\gm)$ is a minimum size determining set for $\gm$. Since $G$ is twin-free, has no isolated vertices, and is not $K_2$, by Lemma~\ref{lem:phiwisv}(\ref{lem:phiwisv:iii}), every automorphism of $\gm$ fixes the root $w$ and so, by the minimality of $S$, we can assume $w \notin S$. Let \[S_0 = \{v_i \in V(G) \mid u_i^s \in S \text{ for some } 0 \le s \le t \}.\] Then $|S_0| \le |S|$. If $\beta\in\aut(G)$ fixes $S_0$ pointwise, then by Lemma~\ref{lem:GenA&Sfixed}(\ref{lem:GenA&Sfixed:shadows}), the automorphism $\widehat\beta$ on $\gm$ fixes $S$. 
Thus, $\widehat\beta$ is the identity on $\gm$ and so restricts to the identity on $G$. Hence, $S_0$ is a determining set for $G$ and so $\det(G)\leq \det(\gm)$, yielding equality.\end{proof}

Now we consider graphs with small determining number. By definition, $\det(G) = 0$ if and only if $G$ has only the trivial automorphism, or, equivalently, if $G$ is asymmetric. In particular, $\det(G) = 0$ if and only if $\dist(G) = 1$. If $G$ has nontrivial automorphisms, then $\det(G) = 1$ if and only if $G$ has a vertex $x$ that forms a singleton determining set. In this case, we can color $x$ red and all other vertices blue to obtain a 2-distinguishing coloring of $G$. This coloring shows that if $\det(G)=1$, then $\dist(G) = 2$ and $\rho(G)=1$. Note that these facts hold for graphs with or without twins.

If $G\neq K_2$ is a twin-free graph with no isolated vertices and $\det(G) = 1$, then by Theorem~\ref{thm:twinfreeDet}, $\det(\mu_t(G))=\det(G) = 1$. Thus, since $\det(\gm)=1$, by the reasoning in the previous paragraph, $\dist(\gm)=2$ and $\rho(\gm)=1$.

 More generally, if $G$ is a twin-free graph with no isolated vertices, then  $\gm$ is 2-distinguishable provided $t$ is sufficiently large relative to the determining number of $G$. The following two theorems make this more precise.

\begin{thm}\label{thm:tfdetdist} 
Let $G$ be a twin-free graph with no isolated vertices such that $\det(G) \geq 2$. Then for $t \geq \lceil \log_2(\det(G)+1)\rceil -1$, 
\[ 
\det(\gm) = \det(G), \dist(\gm) = 2, \text{ and } \]

\[\rho(\gm) \leq \frac{(\det(G)+1) \lceil \log_2(\det(G)+1) \rceil}{2}.
\]\end{thm}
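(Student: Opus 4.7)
First, since $\det(G) = k \geq 2$ forces $G \neq K_2$, Theorem~\ref{thm:twinfreeDet}(\ref{thm:twinfreeDet:notK2}) immediately yields $\det(\gm) = k$. Moreover, $\det(G) \geq 2 > 0$ means $G$ has a nontrivial automorphism, which lifts to $\gm$ by Observation~\ref{obs:auts'}, so $\dist(\gm) \geq 2$. It therefore suffices to exhibit a single $2$-distinguishing coloring of $\gm$ whose red color class has size at most $\frac{(k+1)m}{2}$, where $m = \lceil \log_2(k+1) \rceil$; this will simultaneously pin down $\dist(\gm) = 2$ and bound $\rho(\gm)$ as claimed.

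The plan is to assign each vertex of a minimum determining set a distinct nonempty ``signature'' of red shadows in its column. Let $S = \{s_1, \ldots, s_k\}$ be a minimum determining set of $G$. Since $2^m - 1 \geq k$ and $t + 1 \geq m$, we may choose pairwise distinct nonempty subsets $L_1, \ldots, L_k$ of $\{0, 1, \ldots, m{-}1\} \subseteq \{0, 1, \ldots, t\}$; to keep the red class small, we take these to be the $k$ nonempty subsets of smallest cardinality. Now color $u_{i_p}^j$ red whenever $s_p = v_{i_p}$ and $j \in L_p$, and color every other vertex of $\gm$ (including $w$) blue. The red class then has size exactly $\sum_{p=1}^k |L_p|$.

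To verify this coloring is $2$-distinguishing, let $\widehat\alpha$ be any color-preserving automorphism of $\gm$. The hypotheses rule out $G = K_{1,m'}$ for any $m'$: stars with $m' \geq 2$ have twin leaves, $K_{1,1} = K_2$ is excluded by $k \geq 2$, and $K_{1,0} = K_1$ has an isolated vertex. Hence Lemma~\ref{lem:phiwisv}(\ref{lem:phiwisv:iii}) and Lemma~\ref{lem:GenA&Sfixed} imply that $\widehat\alpha$ fixes $w$, preserves levels, restricts to an automorphism $\alpha \in \aut(G)$, and acts on columns by $\widehat\alpha(u_i^j) = u_{\alpha(i)}^j$ for every $j$. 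Color preservation then forces the signature of column $\alpha(i_p)$ to equal $L_p$; since $L_p$ is nonempty, $\alpha(s_p) \in S$, and since the $L_p$'s are pairwise distinct, $\alpha(s_p) = s_p$. Thus $\alpha$ fixes $S$ pointwise, so $\alpha = \mathrm{id}_G$ and hence $\widehat\alpha = \mathrm{id}_{\gm}$.

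It remains to bound $\sum_{p=1}^k |L_p|$, the sum of the $k$ smallest values in the multiset of cardinalities of nonempty subsets of an $m$-element set. The average of that multiset equals $\frac{m \cdot 2^{m-1}}{2^m - 1}$, and the sum of the $k$ smallest values in any list is at most $k$ times the overall average, so $\sum_p |L_p| \leq \frac{k\, m \cdot 2^{m-1}}{2^m - 1}$. A short manipulation exploiting $2^m \geq k+1$ rewrites this as $\sum_p |L_p| \leq \frac{(k+1)m}{2}$, completing the bound on $\rho(\gm)$. The main obstacle is the third paragraph: extracting from color preservation alone that $\alpha$ must fix $S$ pointwise. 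Once Lemma~\ref{lem:GenA&Sfixed} reduces the problem to the action of $\alpha$ on $G$, however, the distinctness and nonemptiness of the signatures make the conclusion automatic, and the final counting inequality is routine.
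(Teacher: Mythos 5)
Your proof is correct and follows essentially the same strategy as the paper: give each vertex of a minimum determining set a distinct nonempty ``signature'' of red vertices among the first $\lceil \log_2(k+1)\rceil$ levels of its column, then use Lemmas~\ref{lem:phiwisv} and~\ref{lem:GenA&Sfixed} to force a color-preserving automorphism to fix the determining set pointwise. The only (harmless) deviations are that you pick the $k$ smallest-cardinality nonempty subsets rather than the binary representations of $1,\dots,k$, and your averaging argument for $\sum_p |L_p| \le (k+1)\lceil\log_2(k+1)\rceil/2$ is actually more carefully justified than the paper's one-line assertion of that count.
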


\begin{proof} Let $\det(G)=k$. Since $\det(K_2) = 1$ and we are assuming $\det(G) \geq 2$ we have $G\ne K_2$. Thus, by Theorem~\ref{thm:twinfreeDet}, $\det(\mu_t(G)) =\det(G) = k$. Further, since $G$ is twin-free and has no isolated vertices, $G \neq K_{1,m}$ for any $m\ge 0$.

To show $\dist(\gm) = 2$, first observe that since $\det(\gm) \geq 2$, $\gm$ has a nontrivial automorphism and so cannot be distinguished with one color. Thus, $\dist(\gm) \geq 2$.

We will now show $\dist(\gm)\leq 2$. Let $r=\lceil \log_2 (k+1)\rceil$ and let $S = \{v_1, v_2, \dots, v_k\}$ be a determining set for $G$. For $1 \leq i \leq k$, let $b_1b_2\dots b_r$ be the binary representation of $i$ with leading zeros if necessary. For each $1\leq i \leq k$ and each $0\le j\le r-1$, color $u_i^j$ red if $b_j=1$ and $u_i^j$ blue if $b_j=0$. We color all other vertices blue. Assume $\widehat\alpha \in \aut(\mu_t(G))$ preserves the red and blue color classes. By Lemma~\ref{lem:GenA&Sfixed}(\ref{lem:GenA&Sfixed:levels}), since $G$ is not a star graph, $\widehat \alpha$ preserves levels. Furthermore, by Lemma~\ref{lem:GenA&Sfixed}(\ref{lem:GenA&Sfixed:shadows}) we know that $\widehat\alpha(v_i) = v_j$ if and only if $\widehat \alpha(u_i^s) = u_j^s$ for all $0 <s \le t$. For every $u_i^0=v_i \in S$, the distinct sequence of colors in the ordered set $\{ u^0_i,\dots,u_i^r \}$ guarantees that $\widehat\alpha(v_i) = v_i$ for all $1 \le i \le k$. By the fact that $S$ is a determining set for $G$, and by Lemma~\ref{lem:GenA&Sfixed}(\ref{lem:GenA&Sfixed:restrict}) and~(\ref{lem:GenA&Sfixed:shadows}), we now have that $\widehat\alpha$ fixes the vertices at level $0$ and therefore fixes every vertex in $\gm$. This shows that $\dist(\gm) \leq 2$ and completes the proof that $\dist(\gm) = 2$. Furthermore, this coloring has no more than $(k+1)\frac{r}{2} = (k+1) \lceil \log_2(k+1) \rceil/2$ red vertices, which gives us the upper bound on $\rho (\gm)$.\end{proof}

Note that the bound on $t$ in Theorem~\ref{thm:tfdetdist} is sharp. To see this, consider $G=K_5$ and $t=1$. Since $\det (K_5)=4$, we have $t<\lceil\log_2(4+1)\rceil-1$. By Lemma~\ref{lem:A&Sfixed}, every automorphism of $\mu(K_5)$ fixes $w$ and acts on original-shadow vertex pairs of the form $(v_i, u_i).$ Since $K_5$ is vertex-transitive, there is an automorphism of $\mu(K_5)$ taking each pair $(v_i,u_i)$ to any other pair $(v_j,u_j)$. Thus, in a 2-distinguishing coloring of $\mu(K_5)$, each of the five vertex pairs must be assigned a distinct ordered pair of colors. 

However, since there are precisely four ways in which we can 2-color a pair $(v_i,u_i)$, we see that in any 2-coloring of $\mu(K_5)$ at least two of the vertex pairs must have the same coloring. Thus, no 2-coloring of $\mu(K_5)$ can be distinguishing. So we see that the bound $t\geq \lceil\log_2(4+1)\rceil-1$ is sharp.

The upper bound on $\rho (\gm)$ is sharp as well. To see this, consider $G=K_4$ and $t=1$. Since $\det (K_4)=3$ and $t \ge \lceil\log_2(3+1)\rceil-1$, Theorem~\ref{thm:tfdetdist} applies. To explicitly find $\rho(\mu(K_4))$, as in the previous example, note that in any 2-distinguishing coloring of $\mu (K_4)$ the ordered pairs of the form $(v_i,u_i)$ must be distinguished from each other. This forces us to use all four of the ordered pairs of two colors. This implies that we must use each of the colors on at least 4 vertices. Therefore,  $\rho (\mu (K_4))=4 = ((k+1) \lceil \log_2(k+1) \rceil)/2,$ precisely our upper bound on $\rho(\mu(G)).$

For many values of $t$ and $k=\det(G)$, it is possible to to find a coloring that does better in terms of the cost than the method above. For example, if $k+1$ is not a power of 2, we may choose a set of $k$ integers in the range $1,\dots,2^m$ that minimizes the number of 1s in their binary representations. When $t>m -1$, then choosing $k$ integers in the range $1,\dots,2^{t+1}$ gives a similar added flexibility. 

The next theorem shows this extra flexibility implies that when $t \ge \det(G) - 1$, the cost of 2-distinguishing $\gm$ achieves the lower bound of $\det(\gm) = \det(G).$

\begin{thm}\label{thm:twinfreeDet1}
 Let $G$ be a twin-free graph with no isolated vertices such that $\det(G) \geq 2$. Then for $t \geq \det(G) -1 $,
\[ 
\det(\gm) = \det(G), \dist(\gm) = 2, \text{ and } \rho(\gm) = \det(G).
\]
\end{thm}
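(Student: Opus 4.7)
The plan is to handle the three equalities in turn, with the bulk of the work going into the cost bound. The equality $\det(\gm) = k$ is immediate from Theorem~\ref{thm:twinfreeDet}(\ref{thm:twinfreeDet:notK2}), since $\det(K_2) = 1 < k$ rules out $G = K_2$. The equality $\dist(\gm) = 2$ follows from Theorem~\ref{thm:tfdetdist}: for $k \geq 2$ one has $2^{k-1} \geq k$, so $k - 1 \geq \lceil \log_2(k+1) \rceil - 1$, and the current hypothesis $t \geq k-1$ is at least as strong as the hypothesis needed there.

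For the cost equality, the lower bound $\rho(\gm) \geq k$ comes for free: since $\dist(\gm) = 2$, every color class of a 2-distinguishing coloring is a determining set (as noted in the introduction), so $\rho(\gm) \geq \det(\gm) = k$. The work is to exhibit a 2-distinguishing coloring with exactly $k$ vertices of one color.

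For the upper bound, I would use a ``diagonal'' coloring. Let $S = \{v_1, \dots, v_k\}$ be a minimum determining set for $G$. Because $t \geq k-1$, the levels $0, 1, \dots, k-1$ all exist in $\gm$. Color $u_i^{i-1}$ red for each $1 \leq i \leq k$ and color every remaining vertex blue, giving $|{\rm red}| = k$ with exactly one red vertex at each level $s \in \{0, 1, \dots, k-1\}$. To verify this is distinguishing, let $\widehat\alpha \in \aut(\gm)$ preserve the color classes. Since $G$ is twin-free with no isolated vertices and $\det(G) \geq 2$, we have $G \neq K_{1,m}$ for any $m$, so Lemma~\ref{lem:phiwisv}(\ref{lem:phiwisv:iii}) gives $\widehat\alpha(w) = w$, and then Lemma~\ref{lem:GenA&Sfixed}(\ref{lem:GenA&Sfixed:levels}) forces $\widehat\alpha$ to preserve every level. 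At each level $s \in \{0, 1, \dots, k-1\}$, the unique red vertex is $u_{s+1}^s$, so $\widehat\alpha(u_{s+1}^s) = u_{s+1}^s$. By Lemma~\ref{lem:GenA&Sfixed}(\ref{lem:GenA&Sfixed:shadows}), the induced automorphism $\alpha \in \aut(G)$ satisfies $\alpha(v_{s+1}) = v_{s+1}$ for each such $s$, so $\alpha$ fixes $S$ pointwise and is therefore trivial. Lemma~\ref{lem:GenA&Sfixed}(\ref{lem:GenA&Sfixed:shadows}) applied again shows $\widehat\alpha$ is trivial on every level, completing the proof.

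There is no real obstacle here; the one substantive point is seeing that the hypothesis $t \geq k-1$ is exactly what is needed so that the $k$ red vertices can be spread across $k$ distinct levels. Level-preservation (from Lemma~\ref{lem:GenA&Sfixed}(\ref{lem:GenA&Sfixed:levels})) then upgrades ``setwise fixed'' to ``individually fixed'' for the red class without spending any extra color, which is precisely why the logarithmic overhead in Theorem~\ref{thm:tfdetdist} disappears in this regime.
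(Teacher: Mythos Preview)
Your proposal is correct and follows essentially the same approach as the paper: the paper also invokes Theorem~\ref{thm:tfdetdist} for $\det(\gm)=k$ and $\dist(\gm)=2$, uses the identical diagonal set $\widehat S=\{u_1^0,u_2^1,\dots,u_k^{k-1}\}$ for the upper bound on $\rho$, and obtains the lower bound from $\rho(\gm)\ge\det(\gm)$. Your write-up in fact supplies more detail than the paper (which simply says the distinguishing verification is ``similar to the proof in Theorem~\ref{thm:tfdetdist}''); the only cosmetic point is that the cleanest justification of $k-1\ge\lceil\log_2(k+1)\rceil-1$ is the inequality $2^k\ge k+1$ rather than $2^{k-1}\ge k$, though of course the latter implies the former.
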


\begin{proof}   
Since $t\ge \det(G)-1$ we have $t \geq \lceil \log_2(\det(G)+1)\rceil -1$. So by Theorem~\ref{thm:tfdetdist}, $\det(\gm) = \det(G)$ and $\dist(\gm) = 2$. 

Let $\det(G)=k$ and let$S = \{v_1,v_2,\dots,v_k\}$ be a determining set of $G$. Define $\widehat S =\{u_1^0, u_2^1, \dots, u_k^{k-1}\}$. Color the vertices in $\widehat S$ red and all other vertices blue. The proof that this is a 2-distinguishing coloring of $\gm$ is similar to the proof in Theorem~\ref{thm:tfdetdist}.

Now, since $|\widehat S| = k = \det(G)$, the size of a color class in the 2-distinguishing coloring above, $\rho(\mu_t(G)) \le \det(G).$ If there is a 2-distinguishing coloring of $\gm$ with a color class of size $\det(G)-1$, then $\gm$ would have a determining set of size $\det(G)-1$. Since $\det(\mu_t(G)) = \det(G)$, we can now conclude that $\rho(\mu_t(G))= \det(G)$ when $t\geq \det(G)-1$.\end{proof}

As noted earlier, Alikhani and Soltani~\cite{AS2018} showed that the classic Mycielski graphs $M_k = \mu^k(K_2)$ satisfy $\dist(M_n) = 2$ for any $ n \ge 2$. We can obtain this result and more by noting that $M_1 = C_5 \ne K_2$ is twin-free, has no isolated vertices, and satisfies $\det(M_1) = \det(C_5) = 2$.
We now apply Theorem~\ref{thm:twinfreeDet1} iteratively, with $t$ mercifully equal to $1$, to achieve the following.

\begin{cor}
For all $n \ge 2$, $\det(M_n) = \dist(M_n) = \rho(M_n) = 2$.
\end{cor}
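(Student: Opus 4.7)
The proof will be a short induction on $n \ge 3$, built by iterating Theorem~\ref{thm:twinfreeDet1} with $t = 1$. The excerpt has essentially set up the base case: the graph $M_3 = C_5$ is not $K_2$, is twin-free, has no isolated vertices, and satisfies $\det(C_5) = 2$, so taking $G = C_5$ with $k = 2$ and $t = 1 = k-1$ in Theorem~\ref{thm:twinfreeDet1} immediately yields $\det(M_4) = \dist(M_4) = \rho(M_4) = 2$.

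For the inductive step, I assume $\det(M_n) = \dist(M_n) = \rho(M_n) = 2$ for some $n \ge 4$, and I aim to apply Theorem~\ref{thm:twinfreeDet1} to $G = M_n$ with $t = 1$. This requires verifying three structural hypotheses: $M_n \ne K_2$, $M_n$ has no isolated vertices, and $M_n$ is twin-free. The first is immediate since $|V(M_n)| \ge |V(M_3)| = 5$ for all $n \ge 3$. For the second, note that in $\mu(G)$ every original vertex retains its $G$-edges, every shadow vertex $u_i$ inherits the (nonempty) neighborhood of $v_i$, and $w$ is adjacent to every shadow, so the property ``no isolated vertices'' is preserved by the Mycielski construction. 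For the third, Observation~\ref{obs:twins1} says that any twin pair in $\mu(G)$ consists of two originals or two shadows, and Observation~\ref{obs:twins3} says that in either case the underlying pair $\{v_i,v_j\}$ must already be twins in $G$; so twin-freeness is also preserved by $\mu$. Given these three properties, Theorem~\ref{thm:twinfreeDet1} applied with $k = 2$ and $t = 1$ yields $\det(M_{n+1}) = \dist(M_{n+1}) = \rho(M_{n+1}) = 2$, closing the induction.

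There is no real obstacle here: the entire corollary is a direct iteration of Theorem~\ref{thm:twinfreeDet1}, and, as the excerpt remarks, the arithmetic is especially forgiving because $t = 1 \ge k - 1$ holds with equality at every stage of the tower. The only substantive content is checking that the hypotheses of Theorem~\ref{thm:twinfreeDet1} propagate from each $M_n$ to $M_{n+1}$, and this reduces entirely to the two observations on twins from Section~\ref{sec:autos}.
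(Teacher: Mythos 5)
Your proof is correct and follows the same route as the paper, which likewise obtains the corollary by iterating Theorem~\ref{thm:twinfreeDet1} with $t=1$ starting from $M_3=C_5$. The only difference is that you make explicit the propagation of the hypotheses (twin-freeness, no isolated vertices, $\ne K_2$) via Observations~\ref{obs:twins1} and~\ref{obs:twins3}, which the paper leaves implicit.
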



\section{\boldmath ${\rm Det}(\mu_t(G))$ for $G$ with Twins} \label{sec:det&rhoTwins}

 We next consider graphs with twin vertices. For vertices $x$, $y$ of a graph $G$, define $x \sim y$ if $x$ and $y$ are twin vertices. It is easy to verify that $\sim$ is an equivalence relation on $V(G)$. 
 
 The quotient graph with respect to the relation $\sim$, denoted $\tG$, has as its vertices the set of equivalence classes $[x] = \{y \in V(G) \mid x \sim y\}$ with $[x]$ adjacent to $[z]$ in $\tG$ if and only if there exist $p \in [x]$ and $q \in [z]$ such that $p$ and $q$ are adjacent in $G$. By definition of $\sim$, all vertices in an equivalence class have the same neighbors, so in our case $[x]$ is adjacent to $[z]$ in $\tG$ if and only if $x$ is adjacent to $z$ in $G$. Thus,
 \[
 N_{\tG} ([x]) = \{ [z] \mid z \in N_G(x)\}.
 \]
 In particular, if
 $N_{\tG} ([x]) = N_{\tG} ([y])$, then $N_G(x) = N_G(y)$. Then $x$ and $y$ are twins in $G$, and so $[x]=[y]$ in $\tG$. This implies that $\tG$ is twin-free. In fact, $G$ is twin-free if and only if $\tG=G$. In this section we focus on graphs that have twins, that is, graphs for which $\tG \neq G$.
 
Given any set of mutually twin vertices in a graph $G$, there are automorphisms that permute the vertices in the set while fixing all other vertices. We may think of this as a type of local symmetry. By collapsing mutually twin vertices into a single vertex, the quotient graph $\tG$ captures the \lq non-twin' structure of $G$ and allows us to investigate the more global symmetry of $G$.
 
Since automorphisms preserve neighborhoods, and vertices of $G$ are identified in $\tG$ exactly when they have identical neighborhoods, every automorphism $\alpha$ of $G$ induces an automorphism $\widetilde \alpha$ of $\tG$ given by $\widetilde \alpha([x]) = [\alpha(x)]$. However, it need not be the case that all automorphisms of $\tG$ arise in this way. The only nontrivial automorphism of $G$ in Figure~\ref{fig:tGisP4} is the one interchanging the twin vertices $x$ and $y$, which induces the identity on $\tG = P_4$. However, $P_4$ has a nontrivial automorphism.

\begin{figure}[h]
  \centering
     \begin{tikzpicture}[scale=.7]
\draw[fill=black!100,line width=1] (0,0) circle (.2);
\draw[fill=black!100,line width=1] (135:2) circle (.2);
\draw[fill=black!100,line width=1] (-135:2) circle (.2);
\draw[fill=black!100,line width=1] (2,0) circle (.2);
\draw[fill=black!100,line width=1] (4,0) circle (.2);
\draw[black!100,line width=1.5pt] (0,0) -- (4,0);
\draw[black!100,line width=1.5pt] (0,0) -- (135:2);
\draw[black!100,line width=1.5pt] (0,0) -- (-135:2);
\draw (0,-.5) node{$u$};
\draw (2,-.5) node{$v$};
\draw (4,-.5) node{$w$};
\draw (-1.4,-2) node{$y$};
\draw (-1.4,1.9) node{$x$};
\draw (1,-1.5) node{$G$};
\begin{scope}[shift={(9,0)}]
\draw[fill=black!100,line width=1] (0,0) circle (.2);
\draw[fill=black!100,line width=1] (-2,0) circle (.2);
\draw[fill=black!100,line width=1] (2,0) circle (.2);
\draw[fill=black!100,line width=1] (4,0) circle (.2);
\draw[black!100,line width=1.5pt] (-2,0) -- (4,0);
\draw (0,-.5) node{$[u]$};
\draw (2,-.5) node{$[v]$};
\draw (4,-.5) node{$[w]$};
\draw (-2,-.5) node{$[x]$};
\draw (1,-1.5) node{$\tG$};
\end{scope}
\end{tikzpicture}
  \caption{A graph $G$ and its quotient graph $\tG$.} 
  \label{fig:tGisP4}
\end{figure}
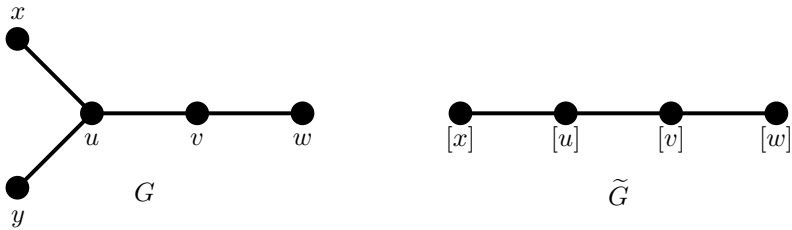

 Throughout the rest of this section, we will use $\tG$ to denote the quotient graph of $G$ and $\widetilde \alpha$ to denote an automorphism of its quotient graph. All sets of vertices with tilde notation will represent the corresponding sets of vertices in the quotient graph. 

We call a subset of $V(G)$ that contains all but one vertex from each equivalence class of twin vertices a {\em minimum twin cover} and denote it by $T$.  Recall that every determining set must contain all but one vertex from each collection of mutual twins. That is, every determining set must contain a minimum twin cover. Thus, if $T$ is a minimum twin cover of $G$, then $|T|\le \det(G)$. If a minimum twin cover is a determining set for $G$, then it is a minimum size determining set.

Given a minimum twin cover $T$, denote its image under the quotient map by $\tT$.  For example, for the graph $G$ in Figure~\ref{fig:tGisP4}, if $T = \{y\}$, then $\tT = \{[y]\}$. Furthermore, since $x$ and $y$ are twins, if $T = \{x\}$, we still have $\tT = \{[x]\} = \{[y]\}$. In fact, all minimum twin covers have the same image under the quotient map.  That is, for $T$ any minimum twin cover, $\widetilde T$ is the set of non-singleton equivalence classes.

\begin{lemma}\label{lem:autoInduced} Let $T$ be a minimum twin cover of $G$ and let $T \subseteq S$, for some $S\subseteq V(G)$. Suppose $\widetilde{\alpha}$ is an automorphism of $\tG$ that fixes $\tS$. Then there exists an automorphism $\alpha$ of $G$ that fixes $S$ and induces $\widetilde{\alpha}$.
\end{lemma}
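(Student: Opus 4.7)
The plan is to explicitly construct $\alpha$ by declaring it to act as the identity on the non-singleton equivalence classes (which automatically fixes $T$), while mirroring the action of $\widetilde\alpha$ on the singleton classes.

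First, I would unpack what $\tT$ looks like. Since $T$ is a \emph{minimum} twin cover, it contains exactly $|[x]|-1$ vertices from each non-singleton equivalence class $[x]$, and no vertices from singleton classes. Therefore $\tT$ is precisely the collection of non-singleton classes, and $V(\tG)\setminus\tT$ is precisely the collection of singleton classes. Since $\widetilde\alpha$ fixes $\tT$ pointwise and is a bijection on $V(\tG)$, it must permute the singleton classes among themselves.

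Next, I define $\alpha:V(G)\to V(G)$ by
\[
\alpha(v) = \begin{cases} v, & \text{if } [v]\in\tT,\\ y, & \text{if } [v]=\{v\} \text{ and } \widetilde\alpha(\{v\})=\{y\}. \end{cases}
\]
Then I would verify four things. \emph{(a) Bijection:} $\alpha$ acts as the identity on the union of the non-singleton classes and as the bijection induced by $\widetilde\alpha$ on the singletons, so it is a bijection on $V(G)$. \emph{(b) Adjacency preservation:} Given $xy\in E(G)$, first note that twin vertices are never adjacent (since $N_G(x)=N_G(y)$ forces $x\notin N_G(y)$), so $[x]\neq [y]$ and hence $[x][y]\in E(\tG)$. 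Applying $\widetilde\alpha$ gives $[\alpha(x)][\alpha(y)]\in E(\tG)$, and since every vertex in a given class shares the same open neighborhood in $G$, this lifts to $\alpha(x)\alpha(y)\in E(G)$. \emph{(c) Fixing $T$:} every element of $T$ lies in a non-singleton class, where $\alpha$ is the identity. \emph{(d) Induces $\widetilde\alpha$:} For $v$ in a non-singleton class, $[\alpha(v)]=[v]=\widetilde\alpha([v])$ since $\widetilde\alpha$ fixes $\tT$; for $v$ in a singleton class, $[\alpha(v)]=\widetilde\alpha([v])$ by construction.

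I do not expect any step to pose a real obstacle. The only conceptual point is recognizing that ``$\widetilde\alpha$ fixes $\tT$ pointwise'' together with the minimality of $T$ forces $\widetilde\alpha$ to fix every non-singleton class and to permute singletons among themselves, which leaves us enough freedom to take $\alpha$ to be the identity on all non-singleton classes. Once that is in place, adjacency preservation is immediate from the fact that equivalent vertices have identical neighborhoods in $G$.
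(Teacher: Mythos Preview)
Your proposal is correct and follows essentially the same approach as the paper: define $\alpha$ to be the identity on non-singleton classes and to mirror $\widetilde\alpha$ on singletons, then verify the required properties. In fact, you supply more detail than the paper, which simply states that it is ``straightforward to check'' that $\alpha$ is an automorphism fixing $T$ and inducing $\widetilde\alpha$.
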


\begin{proof} Since $\tT$ is precisely the set of non-singleton equivalence classes and $\widetilde \alpha$ fixes $\tT$, we can define 
\[
\alpha(x) = 
\begin{cases}
x, \quad &\text{ if } [x] \text{ is not a singleton},\\
y, &\text{ if } [x] \text{ is a singleton and } \widetilde \alpha([x]) = [y].
\end{cases}
\]
It is straightforward to check that $\alpha$ is an automorphism of $G$ that fixes $S$ and that $\widetilde \alpha ([v]) = [\alpha(v)]$ for all $v \in V(G)$. Thus, $\widetilde \alpha$ is induced by $\alpha$.
\end{proof}

\begin{cor}\label{cor:TtotT} Let $G$ be a graph.  If $S$ is a determining set for $G$, then $\tS$ is a determining set for $\tG$.\end{cor}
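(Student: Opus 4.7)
The plan is to apply Lemma~\ref{lem:autoInduced} directly and then use the hypothesis that $T$ is a determining set. Specifically, I would start by letting $\widetilde{\alpha}$ be an arbitrary automorphism of $\tG$ that fixes $\tT$ pointwise, and aim to show $\widetilde{\alpha}$ must be the trivial automorphism of $\tG$.

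First I would invoke Lemma~\ref{lem:autoInduced} to obtain an automorphism $\alpha$ of $G$ that fixes $T$ pointwise and induces $\widetilde{\alpha}$ in the sense that $\widetilde{\alpha}([v]) = [\alpha(v)]$ for every $v \in V(G)$. Next, since $T$ is assumed to be a determining set for $G$ and $\alpha$ fixes $T$ pointwise, $\alpha$ must be the identity automorphism on $G$. Consequently, for every $v \in V(G)$ we have $\widetilde{\alpha}([v]) = [\alpha(v)] = [v]$, so $\widetilde{\alpha}$ is the identity automorphism on $\tG$. This shows $\tT$ is a determining set for $\tG$.

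There is essentially no obstacle here, as the corollary follows from stringing together two facts already established: the lemma produces the preimage automorphism $\alpha$ for free, and the determining-set hypothesis on $T$ forces $\alpha$ (and therefore $\widetilde{\alpha}$) to be trivial. The only subtlety worth flagging in the write-up is that "fixes $\tT$" in Lemma~\ref{lem:autoInduced} should be read pointwise, which matches the pointwise-fixing condition required of an automorphism to verify the determining-set property.
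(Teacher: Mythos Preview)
Your proof is correct and follows essentially the same approach as the paper's own proof: take an automorphism $\widetilde\alpha$ of $\tG$ fixing $\tT$ pointwise, lift it via Lemma~\ref{lem:autoInduced} to an automorphism $\alpha$ of $G$ fixing $T$, conclude $\alpha$ is the identity since $T$ is determining, and deduce that $\widetilde\alpha$ is the identity. Your observation about the pointwise-versus-setwise reading of ``fixes $\tT$'' is a fair point to flag, though it causes no trouble here.
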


\begin{proof}
 Let $\widetilde{\alpha}$ be an automorphism of $\tG$ that fixes each element of $\tS$. Every determining set must contain a minimum twin cover and so by Lemma~\ref{lem:autoInduced}, there is an automorphism $\alpha$ of $G$ that fixes $S$ and induces $\widetilde \alpha$. Since $S$ is a determining set for $G$, by definition, $\alpha$ is the identity. Since $\widetilde\alpha$ is induced by $\alpha$, we get that $\widetilde \alpha$ is the identity.
\end{proof}

The converse of the preceding corollary does not hold if $\tS$ is a determining set for $\tG$ but does not contain $\tT$.  For example,  for the graph in Figure~\ref{fig:tGisP4}, $\tS = \{ [u]\}$ is a determining set for $\tG$, but $S = \{u\}$ is not a determining set for $G$ because it does not contain a minimum twin cover. However, if we restrict our attention to determining sets for $\tG$ that contain $\tT$, we can identify a set of \emph{residual} vertices, denoted by $R$ in the result below, to add to any twin cover of $G$ to obtain a determining set.

 \begin{thm}\label{thm:DettG}
 Let $T$ be a minimum twin cover of $G$ and let $\tS$ be a determining set for $\tG$ containing $\tT$. Let $R=\{x\in V(G) \ | \ [x]\in \tS\setminus \tT\}$. Then $S=T \cup R$ is a determining set for $G$. Furthermore, if $\tS$ is of minimum size among determining sets for $\tG$ that contain $\tT$, then $S$ is a minimum size determining set for $G$.

\end{thm}

\begin{proof}
Given $T$ and $\tS$, define $S$ as above. First we will show that $S$ is a determining set for $G$. Let $\alpha$ be an automorphism of $G$ that fixes each vertex in $S$. Let $\widetilde \alpha$ be the automorphism of $\tG$ induced by $\alpha$. Since $\alpha$ fixes each vertex in $S$, $\widetilde \alpha$ fixes each vertex in $\tS$. Since $\tS$ is a determining set for $\tG$, $\widetilde{\alpha}$ must be the identity on $\tG$. This implies that, in particular, $\widetilde{\alpha}$ fixes all singleton equivalence classes and so $\alpha$ fixes all twin-less vertices. Since $\widetilde{\alpha}$ also fixes non-singleton equivalence classes, $\alpha$ preserves equivalence classes of twins setwise. So, if a vertex $x$ has a twin, then either $x\in T\subseteq S$ and is fixed by $\alpha$, or $x\notin T$, but all of the twins of $x$ are in $T$, and are therefore fixed. In the latter case, since $\alpha$ preserves twins setwise, $x$ can only be mapped to one of its twins, and $x$ is fixed as well. Thus, $\alpha$ is the identity and $S$ is a determining set for $G$.

We will now show that if $\tS$ is a minimum size for a determining set for $\tG$ that contains $\tT$, then $S$ is a minimum size determining set for $G$. Suppose that $S$ is not of minimum size, and therefore, there is a determining set $B$ for $G$ such that $|B| < |S|$. We will show $|\tB|<|\tS|$ and that $\tB$ is a determining set for $G$, a contradiction. 

Since $B$ must contain a minimum twin cover, we may assume without loss of generality, by swapping a vertex with one of its twins if necessary, that $T \subseteq B$. By definition $T\subseteq S$ as well, so $|B\backslash T| < |S\backslash T|$. Because $B$ is of minimum size, no vertex in $B\backslash T$ has a twin in $V(G)$, and therefore the vertices in $\tB\backslash \tT$ are singleton equivalence classes. Thus, $ |\tB\backslash \tT|=|B\backslash T|$. The same is true for $S \backslash T$ by definition of $S$, and so $|\tS\backslash \tT|=|S\backslash T|$. All together we have that $|\tB\backslash \tT| < |\tS\backslash \tT|$. Since $\tB$ and $\tS$ each contain $\tT$, we now conclude that $|\tB| < |\tS|$. 

To show $\tB$ is a determining set for $\tG$, let $\widetilde \alpha$ be an automorphism of $\tG$ that fixes each element of $\tB$, and hence, each element of $\tT$. Let $\alpha$ be the automorphism of $G$  defined in  Lemma~\ref{lem:autoInduced} that induces $\widetilde \alpha$. Since $\widetilde{\alpha}$ fixes $\tB$ it fixes both $\tT$ and the singleton equivalence classes in $\tB\backslash \tT$. Because $\widetilde{\alpha}$ fixes the singleton equivalence classes in $\tB$, $\alpha$ fixes the twin-less vertices in $B$.  As proved in Lemma~\ref{lem:autoInduced}, $\alpha$ also fixes the vertices of $T$. Thus, $\alpha$ fixes all vertices in $B$. By the assumption that $B$ is a determining set for $G$, $\alpha$ is the identity on $G$. Since it is induced by $\alpha$, $\widetilde \alpha$ is the identity on $\tG$.  Thus $\tB$ is a determining set for $\tG$ of size smaller than $\tS$, a contradiction to our choice of $\tS$.\end{proof}

We note here that the proof of Theorem~\ref{thm:DettG} actually says something stronger: given a determining set $\tS$ for $\tG$ containing $\tT$, we can find $R$ so that for any minimum twin cover $T$, $S=T\cup R$ is a determining set for $G$.

Theorem~\ref{thm:DettG} yields natural bounds on $\det(G)$ in terms of $|T|$ and $\det(\tG)$. When the image $\tT$ of a minimum twin cover $T$ of $G$ yields a determining set for $\tG$, then $\det(G) = |T|$. Alternatively, when the image $\tT$ of a minimum twin cover of $G$ is disjoint from any minimum size determining set for $\tG$, then $\det(G) = |T| + \det(\tG)$.  Figures~\ref{fig:tGisP4} and~\ref{fig:C4Pend} depict both such scenarios. 

\begin{ex}\label{ex:lower}
For the graph $G$ in Figure~\ref{fig:tGisP4}, $T = \{y\}$ is both a minimum twin cover  and a minimum size determining set for $G$. Thus, $\det(G)=|T|$. Note that in this case $\tT = [x]$ yields a determining set for $\tG$ and $R$ is empty. 
Furthermore, by adding pendant edges to $G$ at $u$, increasing the number of twins of $x$ and the size of $T$, we can generate an infinite family of graphs for which $\det(G)=|T|$.
\end{ex}

\begin{ex}\label{ex:upper}
Let $G$ be the graph in Figure~\ref{fig:C4Pend} with $T= \{x_2, \dots x_n\}$. Here we can see that $\tG = P_5$ and $\tT$ consists only of its central vertex. This is not a determining set for $\tG$, although any singleton equivalence class of $V(\tG)$ would be. Any two-vertex set containing the central vertex, such as $\tS = \{[x_1], [v]\}$, is a minimum size determining set for $\tG$ that contains $\tT$. For this $\tS$, we get $R = \{v\}$. 
By Theorem~\ref{thm:DettG}, $S= \{x_2, \dots x_n, v\}$ is a minimum size determining set for $G$. This family of examples satisfies $\det(G) = |T| + \det(\tG)$.
\end{ex}

\begin{figure}[h]
  \centering
\begin{tikzpicture}[scale=.6]
\draw[fill=black!100,line width=1] (0,-2) circle (.2);
\draw[fill=black!100,line width=1] (0,2) circle (.2);
\draw[fill=black!100,line width=1] (2,0) circle (.2);
\draw[fill=black!100,line width=1] (-2,0) circle (.2);
\draw[fill=black!100,line width=1] (-4,0) circle (.2);
\draw[fill=black!100,line width=1] (4,0) circle (.2);
\draw[fill=black!100,line width=1] (0,1) circle (.2);
\draw[fill=black!100,line width=1] (0,0) circle (.1);
\draw[fill=black!100,line width=1] (0,-.5) circle (.1);
\draw[fill=black!100,line width=1] (0,-1) circle (.1);
\draw[black!100,line width=1.5pt] (0,-2) -- (2,0) -- (0,2) -- (-2,0) -- (0,-2);
\draw[black!100,line width=1.5pt] (-2,0) -- (0,1) -- (2,0);
\draw[black!100,line width=1.5pt] (-4,0) -- (-2,0);
\draw[black!100,line width=1.5pt] (4,0) -- (2,0);
\draw (-4,-.5) node{$u$};
\draw (-2,-.5) node{$v$};
\draw (2,-.5) node{$w$};
\draw (4,-.5) node{$z$};
\draw (0,1.4) node{$x_2$};
\draw (0,2.5) node{$x_1$};
\draw (0,-2.5) node{$x_n$};
\end{tikzpicture}
  \caption{A graph $G$ for which no minimum twin cover is a determining set.}
  \label{fig:C4Pend}
\end{figure}
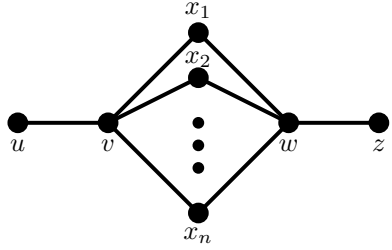

We note here that the set of residual vertices introduced in Theorem~\ref{thm:DettG} and denoted by $R$, consists solely of vertices without twins. Thus, the set $R=\{x\in V(G) \ | \ [x]\in \tS\setminus \tT\}$ has a 1 to 1 correspondence with the associated set $\tR =  \tS\setminus \tT$ in $\tG$.

We now formalize the bounds given by Theorem~\ref{thm:DettG} in the following corollary.

\begin{cor}\label{cor:twinBounds}
Let $T$ be a minimum twin cover of $G$. Then 
\[
|T| \leq \det(G) \leq |T| {+} \det(\tG),
\] with both bounds sharp.
\end{cor}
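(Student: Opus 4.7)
The plan is to derive both bounds directly from Theorem~\ref{thm:DettG} together with the basic fact that every determining set must contain a twin cover, and then to invoke the preceding examples for sharpness.

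For the lower bound, I would observe that a minimum twin cover $T$ is, by definition, a smallest set meeting every equivalence class of twins in all but one vertex. As noted immediately before Lemma~\ref{lem:autoInduced}, every determining set of $G$ must contain all but one representative from each collection of mutual twins, i.e., must contain \emph{some} twin cover; by minimality of $T$ this forces $|T| \le \det(G)$.

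For the upper bound, I would apply Theorem~\ref{thm:DettG}. Let $\tS$ be a minimum-size determining set for $\tG$ that contains $\tT$; then $S = T \cup \{x \in V(G) \mid [x] \in \tS \setminus \tT\}$ is a minimum-size determining set for $G$, and because the map $x \mapsto [x]$ is injective on $S \setminus T$ (its image lands in singleton classes), we get
\[
\det(G) \;=\; |S| \;=\; |T| + |\tS \setminus \tT|.
\]
Let $\widetilde D$ be a minimum-size determining set for $\tG$. Since any superset of a determining set is still determining, $\tT \cup \widetilde D$ is a determining set for $\tG$ that contains $\tT$, so by the minimality of $\tS$,
\[
|\tS| \;\le\; |\tT \cup \widetilde D| \;\le\; |\tT| + |\widetilde D| \;=\; |\tT| + \det(\tG).
\]
Hence $|\tS \setminus \tT| \le \det(\tG)$, and combining with the display above yields $\det(G) \le |T| + \det(\tG)$.

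For sharpness, I would simply point to the two examples already displayed: Example~\ref{ex:lower} (and the infinite family obtained by enlarging the twin class $[x]$) attains the lower bound $\det(G) = |T|$, while Example~\ref{ex:upper} attains the upper bound $\det(G) = |T| + \det(\tG)$ since $\det(P_5) = 1$ and the unique non-singleton class of $\tG$ is not by itself determining. No step here is delicate; the main thing to be careful about is ensuring that $|S \setminus T| = |\tS \setminus \tT|$, which uses the fact (from the proof of Theorem~\ref{thm:DettG}) that vertices contributed to $S$ beyond $T$ come from singleton equivalence classes and so are in bijection with their images in $\tG$.
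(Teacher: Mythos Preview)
Your proposal is correct and follows essentially the same route as the paper: both arguments obtain the lower bound from the observation (stated just before Lemma~\ref{lem:autoInduced}) that any determining set contains a minimum twin cover, and both obtain the upper bound via Theorem~\ref{thm:DettG} by considering $\tT \cup \widetilde D$ for a minimum determining set $\widetilde D$ of $\tG$. The only cosmetic difference is that the paper lifts $\tT \cup \widetilde D$ directly to a determining set of $G$ and bounds its size, whereas you first compute $\det(G) = |T| + |\tS \setminus \tT|$ exactly and then bound $|\tS \setminus \tT|$; the sharpness examples are identical.
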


\begin{proof}
Let $\widetilde D$ be a minimum size determining set for $\tG$. Then $\det(\tG) = |\widetilde D|$ and $\tS = \tT \cup \widetilde D$ is a determining set for $\tG$ containing the set of non-singleton equivalence classes $\tT$. Note that $\widetilde D$ and $\tT$ are not necessarily disjoint.  However, we can write $\tS$ as the disjoint union of $\tT$ and $\tR=\widetilde D\setminus \tT$ and note that $0\leq |\tR|\leq |\widetilde D|=\det(\tG)$. Now by 
Theorem~\ref{thm:DettG}, $S=T\cup R$ is a minimum size determining set for $G$ of size  $|T| +|R|$.  By the bounds on $|\tR|$ found above, and the fact that $|\tR|=|R|$, $|T| \leq |S| \leq |T|+\det(\tG)$.

Example~\ref{ex:lower} gives sharpness in the lower bound while Example~\ref{ex:upper} gives sharpness in the upper bound.\end{proof}

The preceding corollary can be extended to establish bounds on $\det(\gm)$ in the case where $G$ has twins. To do so, we must investigate how applying the generalized Mycielski construction affects the size of a minimum twin cover, as well as the relationship between $\det(\tG)$ and $\det(\widetilde{\gm})$.

\begin{lemma}\label{lem:twinrepsmu} Suppose that $G$ has no isolated vertices and let $T$ be a minimum twin cover of $G$. Then for $t\geq 1$, the set consisting of vertices in $T$ and all of their shadows, 
\[
\Tt = \{u_i^s \mid v_i \in T, \, 0 \le s \le t \},
\]
is a minimum twin cover of $\gm$ of size $(t{+}1)|T|$.\end{lemma}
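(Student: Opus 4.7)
The plan is to use the three structural observations about twins in generalized Mycielskian graphs (Observations~\ref{obs:twins1'}, \ref{obs:twins2'}, \ref{obs:twins3'}) to give a full description of the twin equivalence classes of $\mu^{(t)}(G)$ in terms of those of $G$, and then count.

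First I would establish the following correspondence. Let $[v_i]$ denote the twin equivalence class of $v_i$ in $G$. I claim that the non-singleton equivalence classes in $\mu^{(t)}(G)$ are exactly the sets
\[
[v_i]^{(s)} := \{u_j^s \mid v_j \in [v_i]\}, \quad \text{for each non-singleton } [v_i] \text{ in } G \text{ and each } 0 \le s \le t.
\]
That every such set is an equivalence class of twins in $\mu^{(t)}(G)$ follows from Observation~\ref{obs:twins2'}. Conversely, by Observation~\ref{obs:twins1'} (using the hypothesis that $G$ has no isolated vertices), any twin pair in $\mu^{(t)}(G)$ lies at a single level, and by Observation~\ref{obs:twins3'} such a pair must come from a twin pair of $G$. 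Thus every non-singleton class of $\mu^{(t)}(G)$ arises as some $[v_i]^{(s)}$, and moreover $|[v_i]^{(s)}| = |[v_i]|$ for all $s$.

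Next I would verify that $T^{(t)}$ is a twin cover. Fix any twin pair $\{u_i^s, u_j^s\}$ in $\mu^{(t)}(G)$. By Observation~\ref{obs:twins3'}, $\{v_i, v_j\}$ is a twin pair in $G$, so because $T$ is a twin cover of $G$ at least one of $v_i, v_j$ lies in $T$; by the definition of $T^{(t)}$, the corresponding $u_i^s$ or $u_j^s$ lies in $T^{(t)}$. Counting is immediate: each $v_i \in T$ contributes exactly the $t+1$ vertices $u_i^0, u_i^1, \ldots, u_i^t$, and these are disjoint across different choices of $v_i$, so $|T^{(t)}| = (t+1)|T|$.

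Finally I would show minimality by a class-by-class counting argument. A minimum twin cover of any graph contains all but one vertex from each equivalence class of twins, so its size equals $\sum (|C| - 1)$ where $C$ ranges over the non-singleton equivalence classes. Applied to $G$, this gives $|T| = \sum_{[v_i] \text{ non-singleton}} (|[v_i]| - 1)$. Applied to $\mu^{(t)}(G)$, using the characterization above, every minimum twin cover has size
\[
\sum_{[v_i] \text{ non-singleton}} \sum_{s=0}^{t} (|[v_i]^{(s)}| - 1) = (t+1)\sum_{[v_i] \text{ non-singleton}} (|[v_i]| - 1) = (t+1)|T|.
\]
Since $T^{(t)}$ achieves this size and is a twin cover, it is a minimum twin cover. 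The main content of the argument is really just getting the bookkeeping of the twin classes right; once the characterization of equivalence classes in $\mu^{(t)}(G)$ is established, the counting is essentially forced, and the observations from Section~\ref{sec:autos} do all of the structural work.
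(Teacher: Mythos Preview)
Your proof is correct and follows essentially the same approach as the paper: both invoke Observations~\ref{obs:twins1'}, \ref{obs:twins2'}, and \ref{obs:twins3'} to conclude that twin pairs in $\mu^{(t)}(G)$ are precisely the level-$s$ copies of twin pairs in $G$, and then observe that the level-$s$ copy of $T$ therefore contains all but one vertex from each set of mutual twins at level $s$. The paper's argument is considerably more terse, leaving the class-by-class counting implicit, whereas you spell it out explicitly; but the substance is identical.
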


\begin{proof}
By Observations~\ref{obs:twins1'},~\ref{obs:twins2'}, and~\ref{obs:twins3'}, twin vertices in $\gm$ must be shadows of twins of $G$, and must reside at the same level in $\gm$. Thus, if $T$ contains all but one vertex from any set of mutual twin vertices in $G$, then the copy of $T$ at level $s$ contains all but one vertex from each set of mutual twins in $\gm$ at level $s$. So $T_t$ is a minimum twin cover of $\gm$.\end{proof}

The following lemma proves that the processes of applying the generalized Mycielski construction commutes with the process of applying the canonical map onto the quotient graph. Figure~\ref{fig:quotientK13} illustrates this for the graphs $ K_{1,3}$ and $\mu_t(K_{1,3})$ with $t = 1, 2$, and their quotient graphs. 

\begin{lemma}\label{lem:commutes}
If $G$ has no isolated vertices, then for $t\geq 1$, $\mu_t(\tG) = \widetilde{\gm} $. 
\end{lemma}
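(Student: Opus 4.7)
The plan is to exhibit an explicit vertex bijection between $\mu^{(t)}(\tG)$ and $\widetilde{\gm}$ and then verify it preserves adjacency. Concretely, I will define
\[
\phi: V(\mu^{(t)}(\tG)) \longrightarrow V(\widetilde{\gm})
\]
by $\phi([v_i]^s) = [u_i^s]$ for every $0 \le s \le t$ (where $[v_i]^s$ denotes the level-$s$ shadow of $[v_i]$ in $\mu^{(t)}(\tG)$) and $\phi(\tilde w) = [w]$, where $\tilde w$ is the shadow master of $\mu^{(t)}(\tG)$.

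The first step will be to check that $\phi$ is well-defined and bijective, which is where the observations from Section~\ref{sec:autos} do the work. Well-definedness amounts to showing $[v_i] = [v_j]$ in $\tG$ implies $[u_i^s] = [u_j^s]$ in $\widetilde{\gm}$, which is exactly Observation~\ref{obs:twins2'}. For injectivity, if $\phi([v_i]^s) = \phi([v_j]^r)$, then $[u_i^s] = [u_j^r]$, so Observation~\ref{obs:twins1'} forces $s=r$ (since $G$ has no isolated vertices), and then Observation~\ref{obs:twins3'} gives $v_i \sim v_j$, hence $[v_i]^s = [v_j]^s$. For surjectivity, I need every equivalence class in $\gm$ to be either $[w]$ or of the form $[u_i^s]$; a brief check using the fact that $N(w)$ sits entirely at level $t$ while every other vertex (since $G$ has no isolated vertices) has a neighbor off level $t$ shows $\{w\}$ is its own equivalence class, covering the remaining case.

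Next I will verify that $\phi$ is an isomorphism of graphs by checking adjacency. The edges of $\mu^{(t)}(\tG)$ come in three types by definition of the generalized Mycielski construction applied to $\tG$: intra-level-$0$ edges $[v_i]^0[v_j]^0$ whenever $[v_i][v_j]\in E(\tG)$; cross-level edges $[v_i]^s[v_j]^{s+1}$ whenever $[v_i][v_j]\in E(\tG)$ and $0\le s < t$; and top edges $\tilde w\,[v_i]^t$ for every $i$. By definition of the quotient graph, $[v_i][v_j]\in E(\tG)$ if and only if $v_iv_j\in E(G)$, and by the construction of $\gm$ this happens exactly when the corresponding edges $u_i^s u_j^s$, $u_i^s u_j^{s+1}$, and $w u_i^t$ lie in $\gm$. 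Passing to the quotient gives exactly the edges of $\widetilde{\gm}$ incident to $[u_i^s]$ and $[w]$, so adjacency is preserved in both directions.

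The main (very mild) obstacle is simply the bookkeeping around the shadow master: I must be sure that $[w]$ is a singleton equivalence class in $\widetilde{\gm}$ and that the isolated-vertex-free hypothesis is what makes this true, so that the target vertex set is as described and no extra identifications occur. Once that is settled, well-definedness, bijectivity, and edge-preservation all drop out of the three twin observations and the recursive definition of the Mycielski edge set, yielding $\mu^{(t)}(\tG) = \widetilde{\gm}$.
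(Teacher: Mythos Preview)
Your proposal is correct and follows essentially the same approach as the paper: both define the natural map $[v_i]^s \mapsto [u_i^s]$, $\tilde w \mapsto [w]$ and appeal to Observations~\ref{obs:twins1'}--\ref{obs:twins3'} to justify it. The paper is terser, simply asserting that the map ``preserves both adjacencies and non-adjacencies,'' whereas you spell out well-definedness, bijectivity, the singleton status of $[w]$, and the three edge types explicitly.
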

\begin{proof}
By Observations~\ref{obs:twins1'},~\ref{obs:twins2'}, and~\ref{obs:twins3'},
two vertices are twins in $\gm$ if and only if either they are twins vertices in $G$ or they are shadows at a given level $s$ of twin vertices in $G$. In terms of our equivalence relation,
$v_i \sim v_j $ in $ G$ if and only if $u_i^s \sim u_j^s $ for all $0 \le s \le t $ in $ \gm.
$
This allows us to map the shadows at level $s$ of $[v_i]$ in $\mu_t(\tG)$ to $[u_i^s]$ in $\widetilde{\gm}$. Additionally, if $w$ is the root in $\gm$, then we map the root of $\mu_t(\tG)$ to $[w]$ in $\widetilde{\gm}$. It it straightforward to verify that this map preserves both adjacencies and non-adjacencies. Therefore, $\mu_t(\tG) = \widetilde{\gm}$.\end{proof}

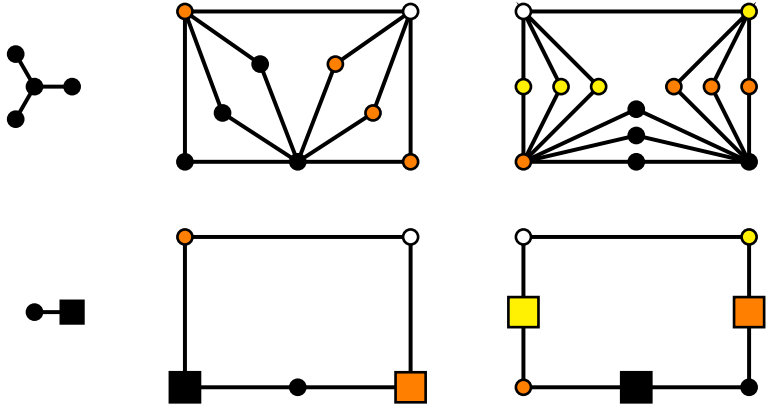
\begin{figure}[h]
  \centering
   \begin{tikzpicture}[scale=.5]
\draw[fill=black!100,line width=1] (0,0) circle (.2);
\foreach \j in {1,...,3}
{\draw[fill=black!100,line width=1] (120*\j:1) circle (.2);\draw[black!100,line width=1.5pt] (120*\j:1) -- (0,0);
}
\begin{scope}[shift={(0,-6)}]
\draw[fill=black!100,line width=1] (0,0) circle (.2);
\draw[black!100,line width=1.5pt] (1,0) -- (0,0);
\draw[fill=black!100,line width=1] (1.3,.3) rectangle (.7,-.3);
\end{scope}
\begin{scope}[shift={(7,-2)}]
\draw[black!100,line width=1.5pt] (-3,0) -- (3,0); 
\draw[black!100,line width=1.5pt] (-3,4) -- (3,4); 
\draw[black!100,line width=1.5pt] (-3,0) -- (-3,4); 
\draw[black!100,line width=1.5pt] (3,0) -- (3,4); 
\draw[black!100,line width=1.5pt] (0,0) -- (2,1.3); 
\draw[black!100,line width=1.5pt] (0,0) -- (1,2.6); 
\draw[black!100,line width=1.5pt] (3,4) -- (2,1.3); 
\draw[black!100,line width=1.5pt] (3,4) -- (1,2.6);
\draw[black!100,line width=1.5pt] (0,0) -- (-2,1.3); 
\draw[black!100,line width=1.5pt] (0,0) -- (-1,2.6); 
\draw[black!100,line width=1.5pt] (-3,4) -- (-2,1.3);
\draw[black!100,line width=1.5pt] (-3,4) -- (-1,2.6);
\draw[fill=black!100,line width=1] (0,0) circle (.2);
\draw[fill=black!100,line width=1] (-3,0) circle (.2);
\draw[fill=orange!100,line width=1] (3,0) circle (.2);
\draw[fill=white!100,line width=1] (3,4) circle (.2);
\draw[fill=orange!100,line width=1] (-3,4) circle (.2);
\draw[fill=orange!100,line width=1] (2,1.3) circle (.2);
\draw[fill=orange!100,line width=1] (1,2.6) circle (.2);
\draw[fill=black!100,line width=1] (-2,1.3) circle (.2);
\draw[fill=black!100,line width=1] (-1,2.6) circle (.2);
\end{scope}
\begin{scope}[shift={(7,-8)}]
\draw[black!100,line width=1.5pt] (-3,0) -- (3,0); 
\draw[black!100,line width=1.5pt] (-3,4) -- (3,4); 
\draw[black!100,line width=1.5pt] (-3,0) -- (-3,4); 
\draw[black!100,line width=1.5pt] (3,0) -- (3,4); 
\draw[fill=black!100,line width=1] (0,0) circle (.2);
\draw[fill=orange!100,line width=1] (-3,4) circle (.2);
\draw[fill=black!100,line width=1] (-2.6,.4) rectangle (-3.4,-.4);
\draw[fill=orange!100,line width=1] (3.4,.4) rectangle (2.6,-.4);
\draw[fill=white!100,line width=1] (3,4) circle (.2);
\end{scope}
\begin{scope}[shift={(16,-6)}]
\draw[black!100,line width=1.5pt] (-3,8) -- (3,8); 
\draw[black!100,line width=1.5pt] (-3,8) -- (-3,4) -- (3,4);
\draw[black!100,line width=1.5pt] (3,4) -- (0,4.7) -- (-3,4); 
\draw[black!100,line width=1.5pt] (-3,4) -- (0,5.4) -- (3,4) -- (3,8); 
\draw[black!100,line width=1.5pt] (-3,4) -- (-2,6) -- (-3,8) -- (-1,6) -- (-3,4);
\draw[black!100,line width=1.5pt] (3,4) -- (2,6) -- (3,8) -- (1,6) -- (3,4);
\draw[fill=black!100,line width=1] (3,4) circle (.2);
\draw[fill=black!100,line width=1] (0,4) circle (.2);
\draw[fill=black!100,line width=1] (0,4.7) circle (.2);
\draw[fill=black!100,line width=1] (0,5.4) circle (.2);
\draw[fill=orange!100,line width=1] (-3,4) circle (.2);
\draw[fill=white!100,line width=1] (-3,8) circle (.2);
\draw[fill=yellow!100,line width=1] (3,8) circle (.2);
\draw[fill=yellow!100,line width=1] (-3,6) circle (.2);
\draw[fill=orange!100,line width=1] (3,6) circle (.2);
\draw[fill=orange!100,line width=1] (2,6) circle (.2);
\draw[fill=orange!100,line width=1] (1,6) circle (.2);
\draw[fill=yellow!100,line width=1] (-2,6) circle (.2);
\draw[fill=yellow!100,line width=1] (-1,6) circle (.2);
\end{scope}

\begin{scope}[shift={(16,-12)}]
\draw[black!100,line width=1.5pt] (-3,8) -- (3,8); 
\draw[black!100,line width=1.5pt] (3,4) -- (3,8);
\draw[black!100,line width=1.5pt] (-3,8) -- (-3,4) -- (3,4);
\draw[fill=black!100,line width=1] (3,4) circle (.2);
\draw[fill=black!100,line width=1] (.4,4.4) rectangle (-.4,3.6);
\draw[fill=orange!100,line width=1] (-3,4) circle (.2);
\draw[fill=white!100,line width=1] (-3,8) circle (.2);
\draw[fill=yellow!100,line width=1] (3,8) circle (.2);
\draw[fill=yellow!100,line width=1] (-2.6,6.4) rectangle (-3.4,5.6);
\draw[fill=orange!100,line width=1] (3.4,6.4) rectangle (2.6,5.6);
\end{scope}
\end{tikzpicture}
 
  \caption{The graphs $ K_{1,3}$ and $\mu_t(K_{1,3})$ for $t = 1, 2$, and their quotient graphs, with the equivalence classes of twin vertices shown as squares.} \label{fig:quotientK13}
\end{figure}

In light of Lemma~\ref{lem:commutes}, for the remainder of the paper, we will use the notation $\mu_t(\widetilde{G})$ rather than $\widetilde{\mu_t(G)}$.

In the following lemma, we use Theorem~\ref{thm:twinfreeDet} and
 Lemma~\ref{lem:commutes} to show that the quotient graphs of $G$ and $\gm$ have the same determining number when $G\ne K_{\ell, m}$.

\begin{lemma}\label{lem:EqualDet}
If $G \ne K_{\ell, m}$ for any $\ell,m \ge 1$ and $G$ has no isolated vertices, then for $t \geq 1$, \[\det(\tG)= \det(\mu_t(\widetilde{G})).\]
\end{lemma}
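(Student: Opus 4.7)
The plan is to reduce the statement to a direct application of Theorem~\ref{thm:twinfreeDet}(\ref{thm:twinfreeDet:notK2}) applied to the quotient graph $\tG$. By Lemma~\ref{lem:commutes}, $\widetilde{\mu^{(t)}(G)} = \mu^{(t)}(\tG)$, so the goal collapses to showing $\det(\mu^{(t)}(\tG)) = \det(\tG)$. Since $\tG$ is twin-free by construction, Theorem~\ref{thm:twinfreeDet}(\ref{thm:twinfreeDet:notK2}) will deliver this equality as soon as I verify two remaining hypotheses for $\tG$: that it has no isolated vertices, and that it is not $K_2$.

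For the first hypothesis, I would argue as follows. Given any $v \in V(G)$, the assumption that $G$ has no isolated vertices furnishes a neighbor $u$ of $v$ in $G$. Every vertex in $[v]$ shares the neighborhood $N_G(v)$, so $u \notin [v]$; otherwise we would have $v \in N_G(u) = N_G(v)$, which is impossible in a simple graph. Hence $[u]$ is a distinct vertex of $\tG$ adjacent to $[v]$.

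For the second hypothesis, I would proceed by contrapositive, showing that $\tG = K_2$ forces $G$ to be complete bipartite. Suppose $\tG = K_2$ with vertex classes $[x]$ and $[y]$. All vertices in $[x]$ share $N_G(x)$ and all vertices in $[y]$ share $N_G(y)$; the adjacency $[x][y] \in E(\tG)$ produces at least one edge between the classes, which by twinness forces every pair in $[x] \times [y]$ to be adjacent in $G$. Setting $\ell = |[x]|$ and $m = |[y]|$ (both at least one, else $\tG$ would not be $K_2$), we conclude $G = K_{\ell,m}$, contradicting the hypothesis.

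Combining these verifications, Theorem~\ref{thm:twinfreeDet}(\ref{thm:twinfreeDet:notK2}) applied to $\tG$ gives $\det(\mu^{(t)}(\tG)) = \det(\tG)$, which together with Lemma~\ref{lem:commutes} yields the claim. The only mildly nontrivial piece is recognizing that the exclusion $G \neq K_{\ell,m}$ is exactly what is needed to rule out $\tG = K_2$; everything else is a direct unpacking of earlier results, and I expect no serious obstacle.
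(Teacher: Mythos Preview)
Your proposal is correct and follows essentially the same approach as the paper: use Lemma~\ref{lem:commutes} to identify $\widetilde{\gm}$ with $\mu^{(t)}(\tG)$, observe that $\tG$ is twin-free with no isolated vertices and $\tG \ne K_2$, and then invoke Theorem~\ref{thm:twinfreeDet}(\ref{thm:twinfreeDet:notK2}). You simply spell out in more detail the two facts the paper asserts without justification, namely that $\tG$ has no isolated vertices and that $G \ne K_{\ell,m}$ is equivalent to $\tG \ne K_2$.
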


\begin{proof} The assumption that $G \ne K_{\ell, m}$ implies that $\tG \ne K_2$. Moreover, $\tG$ is twin-free with no isolated vertices and so by Theorem~\ref{thm:twinfreeDet}(\ref{thm:twinfreeDet:notK2}), $ \det(\mu_t(\tG)) = \det(\tG)$.\end{proof}

We now have the tools to specify precisely how the presence of twins affects the determining number of the generalized Mycielskian of a graph.

 \begin{thm}\label{thm:twinDet}
 Let $G$ be a graph with twins and no isolated vertices. Let $T$ be a minimum twin cover of $G$. Then for $t \geq 1$,

 \[\det(\gm)=t|T| + \det(G).\]
\end{thm}
\begin{proof}

We first consider the case where $G = K_{\ell, m}$ for $1 \leq \ell \leq m$. Since $G$ has twin vertices, it follows that $m \geq 2$. Let $ \{v_1, \dots, v_\ell\}$ and $\{ v_{\ell+1}, \dots v_{\ell + m}\}$ be the partite sets in $V(K_{\ell,m})$. Since every minimum twin cover contains all but one vertex from each partite set, without loss of generality, we can choose $T = V(K_{\ell,m}) \setminus \{v_\ell, v_{\ell+m}\}$ as a minimum twin cover of $G$. It is straightforward to verify that $T$ is a minimum size determining set for $K_{\ell, m}$, and therefore, that $|T|=\det(G)$. 

By Lemma~\ref{lem:twinrepsmu}, $\Tt$ is a minimum twin cover of $\mu_t(G)$ of size $(t+1)|T|$, which in this case is equal to $t|T|+\det(G)$. We next show that $\Tt$ is a minimum size determining set for $\mu_t(G)$. 

Let $\widehat \alpha$ be an automorphism of $\mu_t(G)$ that fixes the vertices in $\Tt$. Since $\ell \ge 1$ and $m \ge 2$, all vertices $u_i^j$ for $\ell+1 \le i \le \ell+m$ and $0 \le j \le t$ are in non-singleton equivalence classes. Since $\widehat \alpha$ fixes all but one vertex in a non-singleton equivalence class of twins, it must then fix the remaining vertex as well. Thus, since we have chosen $\widehat \alpha$ to fix $\Tt$, it fixes each vertex that has a twin. The remaining vertices to be considered are the root $w$, and if $\ell=1$, the vertices $u_1^0,\dots,u_1^t$.  The latter are the center of the star $K_{1,m}$ and its shadows. Continuing our examination of $K_{1,m}$, since for $0\leq i\leq t-1$, $u_1^i$ is only adjacent to shadow vertices of the other partite set, each of $u_1^0,\dots,u_1^{t-1}$ has a unique neighborhood in $\gm$ that is entirely fixed by $\widehat \alpha$. Thus, each of these vertices must also be fixed by $\widehat \alpha$. Further, the vertex $u_1^t$ is adjacent to vertices $u_{2}^{t-1},\dots,u_{m+1}^{t-1}$, all of which are fixed by $\widehat \alpha$, while the vertex $w$ is adjacent to none of these. Therefore, $\widehat \alpha$ must also fix $u_1^t$ and $w$, and so it fixes all vertices of $\gm$. Consequently, in all cases, the minimum twin cover $\Tt$ is a determining set for $\mu_t(G)$, and so is a minimum size determining set.

Now suppose $G\neq K_{\ell,m}$. It may no longer be the case that the minimum twin cover $T$ is a determining set for $G$.  However, we can invoke Theorem~\ref{thm:DettG} to find one. Among all determining sets for $\tG$ that contain $\tT$, let $\tS$ be one of minimum size. Define $R =  \{x \in V(G) \mid [x]\in \tS \setminus \tT\}$ as in Theorem~\ref{thm:DettG} and it follows that $\widetilde R = \tS \setminus \tT$. By Theorem~\ref{thm:DettG}, the disjoint union $T\cup R = S$ is a minimum size determining set for $G$ and, therefore, $\det(G) = |T| + |R|.$

Define $D = \Tt \cup R$, where $\Tt$ is a minimum twin cover for $\gm$, with $T_t$ defined as in Lemma~\ref{lem:twinrepsmu}. We will show $D$ is a minimum size determining set for $\gm$ by first showing $\tD = \widetilde{T_t} \cup \tR$ is a determining set for $\mu_t(\tG)$. 

By definition, $\Tt = \{u_i^s \mid v_i \in T, \, 0 \le s \le t \}$ contains $T$ and so $\widetilde{T_t}$ contains $\tT$. Thus, $\tD = \widetilde{T_t} \cup \tR$ contains  $\tS = \tT \cup \tR$. By definition, $\tS =\tT\cup \tR $ is a determining set for $\tG$. By Theorem~\ref{thm:twinfreeDet}, since $\tG$ is twin-free and $\tG \neq K_2$, we find that $\tS$ is a  determining set for $\mu_t(\tG)$. Since $\tD$ contains $\tS$, it follows that $\tD$ is a determining set for $\mu_t(\tG)$.

We now apply Theorem~\ref{thm:DettG} to $\mu_t(\tG)=\widetilde{\gm}$. By the above arguments, $T_t$ is a minimum twin cover of $\gm$ and $\tD$ is a determining set for $\mu_t(\tG)$ containing $\widetilde{T_t}$. Moreover, $\tD = \widetilde{T_t} \cup \tR$ and it follows immediately from Theorem~\ref{thm:DettG} that $D = T_t \cup R$ is a determining set for $\gm$. 

To show $D$ is of minimum size, suppose to the contrary that there exists a set $B$ such that $|B| < |D|$ and $B$ is a determining set for $\gm$. Similar to the proof of Theorem~\ref{thm:DettG}, we will show that this implies that there exists a determining set for $\mu_t(\tG)$ of size less than $|\tS|$, a contradiction.

Since $B$ is a determining set for $\gm$, $B$ must contain a minimum twin cover of $\mu_t(G)$. We can assume without loss of generality, by replacing some twins in the cover with different twins if necessary, that $B$ contains the twin cover $T_t$. Let $J=B\setminus T_t$. Since $|B| < |D|$, it follows that $|J| < |R|$. 

By assumption, $B$ is a determining set for $\gm$ that contains the  minimum twin cover $T_t$ and so by Corollary~\ref{cor:TtotT}, $\tB = \widetilde J \cup \widetilde{\Tt}$ is a determining set for $\mu_t(\tG)$.

Let $\widetilde{B'}$ be the projection of $\tB$ onto the set of original vertices in $\mu_t(\tG)$. More formally, $\widetilde{B'} = \{ [u_i^0] \mid [u_i^j] \in \tB \text{ for some  } 0 \leq j \leq t \}$. Observe that $|\widetilde{B'}| \leq |\widetilde J \cup \tT| < |\tS|$. Let $\widetilde{\alpha}$ be an automorphism of $\mu_t(\tG)$ that fixes $\widetilde{B'}$. Since $\mu_t(\tG)$ is twin-free, we can apply Lemma~\ref{lem:GenA&Sfixed}(\ref{lem:GenA&Sfixed:shadows}) to conclude that if vertex $[u_i^0]$ is fixed by $\widetilde \alpha$, vertices $[u_i^j]$ for $1 \leq j \leq t$ are also fixed by $\widetilde \alpha$. It follows that $\widetilde \alpha$ fixes $\tB$ and so it is the trivial automorphism. Thus, $\widetilde{B'}$ is a determining set in $\mu_t(\tG)$ containing $\tT$ of size less than $\tS$, a contradiction.

We have shown that $D = T_t \cup R$ is a minimum size determining set for $\gm$. 
By Lemma~\ref{lem:twinrepsmu}, $|D| = |T_t| + |R| = (t+1)|T| + |R|$. 
With a touch of algebra and the fact that $\det(G) = |T|+|R|$, we have \vskip.1in

\hspace{1.25in} $\det(\gm) = t|T| + \det(G)$.\end{proof}

We now combine Theorems~\ref{thm:twinfreeDet} and \ref{thm:twinDet} to state a result that applies to all graphs.

 \begin{thm}\label{thm:GrandFinale}
 Let $G$ be a graph with no isolated vertices and let $T$ be a (possibly empty) minimum twin cover of $G$. Then for $t \geq 1$,
 
 \begin{enumerate}[(i)]
  
   \item If $G = K_2$ then, $\det(G) = 1$ and $\det(\gm) = 2.$ 

    \item If $G \neq K_2$ then, $\det(\gm) = t|T| + \det(G).$
    \end{enumerate}

\end{thm}

Further, we can iterate this result to see what happens when applying the generalized Mycielskian construction numerous times
 We let $\mu_t^1(G) = \gm$ and  for $k \ge 1$, we recursively define $\mu_t^{k+1}(G) = \mu_t (\mu_t^k(G))$.
 
\begin{cor}
Let $G$ be a graph with no isolated vertices and let $T$ be a (possibly empty) minimum twin cover of $G$. Then for $t, k \ge 1$
\begin{enumerate}[(i)]
\item If $G = K_2$ then, $\det(G) = 1$ and $\det(\mu_t^k(G)) = 2.$ 

    \item If $G \neq K_2$, then 
    $\det(\mu_t^k(G)) = [(t+1)^k -1]|T| + \det(G).$
\end{enumerate}
\end{cor}
\begin{proof}
Both parts can be proved by induction on $k$. For (i), note that $\mu^k_t(K_2)$ is twin-free and so by  Theorem~\ref{thm:GrandFinale}(ii),  $\det(\mu_t^{k+1}(K_2)) = \det(\mu^k_t(K_2)) = 2$. For (ii), note that by Lemma~\ref{lem:twinrepsmu}, each application of the generalized Mycielski construction increases the size of a minimum twin cover by a factor of $(t+1)$.
\end{proof}

In particular,  for $t = 1$, when $G=K_2$, we obtain the determining number of the classic Mycielski graphs, 
$\det(\mu^k(K_2)) =\det(M_k) = 2$. For $G \neq K_2$,
$\det(\mu^k(G)) = (2^k - 1) |T| + \det(G).$
This shows that iterating the regular Mycielskian construction on a graph with twins causes the determining number to grow faster than the non-iterated, but multi-layer generalized Mycielskian of the same graph. Put another way, the determining number grows linearly with $t$ but exponentially with $k$.

\section{Acknowledgments}

The work in this article is a result of a collaboration made possible by the Institute for Mathematics and its Applications' Workshop for Women in Graph Theory and Applications, August 2019.


    

\FloatBarrier
\bibliographystyle{plain}
\bibliography{DetResubmission}

\end{document}